\title{Canonical Algebraic Curvature Tensors of Symmetric and Anti-Symmetric Builds}
\author{Elise McMahon}
\newtheorem{theorem}{Theorem}[section]
\newtheorem{lemma}[theorem]{Lemma}
\theoremstyle{definition}
\newtheorem{conj}[theorem]{Conjecture}
\newtheorem{defn}[theorem]{Definition}
\newenvironment{definition}[1][Definition]{\begin{trivlist}
\item[\hskip \labelsep {\bfseries #1}]}{\end{trivlist}}
\newenvironment{remark}[1][Remark]{\begin{trivlist}
\item[\hskip \labelsep {\bfseries #1}]}{\end{trivlist}}
\begin{document}
\maketitle

\abstract{ We relate canonical algebraic curvature tensors that are built from a self-adjoint ($R^S_A$) or skew adjoint ($R^{\Lambda}_A$) linear operator A.  Several authors have proven that any algebraic curvature tensor $R$ may be expressed as a sum of $R^S_A$, or as a sum of $R^{\Lambda}_A$. This motivates our interest in relating them as well as in the linear independence of sets of canonical algebraic curvature tensors. We develop an identity that relates $R^{\Lambda}_A$ to $R^S_A$, which will allow us to employ previous methods used for $R^S_A$ to the case of $R^{\Lambda}_A$ as well as use them interchangeably in some instances. We compute the structure group of $R^{\Lambda}_A$, and develop methods for determining the linear independence of sets which contain both $R^{\Lambda}_A$ and $R^S_A$. We consider cases where the operators are arranged in chain complexes and find that this greatly restricts the linear independence of the curvature tensors with those operators. Moreover, if one of the operators has a nontrivial kernel, we develop a method for reducing the bound on the least number of canonical algebraic curvature tensors that it takes to write a canonical algebraic curvature tensor. }

\section{Introduction and Motivation}

The set of algebraic curvature tensors over $\mathbb{R}$ is a vector space, denoted $\mathcal{A}(V)$. By Nash's Imbedding Theorem, an algebraic curvature tensor with respect to a symmetric canonical curvature tensor is realizable as the curvature tensor of an embedded hypersurface in Euclidean space. Thus, there is  interest in  $\mathcal{A}(V)$ and the symmetric canonical algebraic curvature tensors. Gilkey and Fiedler \cite{Fiedler1, GilkeyH}  proved that
\begin{equation} \label{eq:span}
\mathcal{A}(V)=\text{span} \{ R_A^S | \text{ for } A=A^{*} \} = \text{span} \{R_B^{\Lambda} | \text{ for } B=-B^* \}
\end{equation}
which motivates the study of linear independence of canonical algebraic curvature tensors. Previous results were concerned with sets of only symmetric canonical algebraic curvature tensors (denoted $R^S_A$) or only anti-symmetric canonical algebraic curvature tensors (denoted $R^{\Lambda}_B$ ) \cite{DiazDunn, DiazGarcia, Diroff}. Since both $R^S_B$ and $R^{\Lambda}_A$ span $\mathcal{A}(V)$, we are interested in the linear independence of sets containing both $R^S_B$ and $R^{\Lambda}_A$ and in how $R^S_B$ relates to $R^{\Lambda}_A$. By developing a new identity, we relate $R^{\Lambda}_A$ to $R^S_B$ and use it in considering the linear independence of sets containing both $R^S_B$ and $R^{\Lambda}_A$.

\begin{defn}
Let $V$ be a finite dimensional real vector space of dimension $n$. An \emph{ algebraic curvature tensor} $R$ is a multilinear map $R:  \otimes^4 V  \to \mathbb{R}$ such that 
\begin{enumerate}
\item $R(x,y,z,w)=-R(y,x,z,w)=R(z,w,x,y), $
\item $R(x,y,z,w)+R(z,x,y,w)+R(y,z,x,w)=0$.
\end{enumerate}
The last equation is the Bianchi Identity. 
\end{defn}

For more on algebraic curvature tensors see Gilkey \cite{GilkeyB, GilkeyH}. Let $\phi$ be a positive definite bilinear form throughout the paper and $V$ a finite dimensional vector space. Also, we use capital roman letters to denote linear endomorphisms of $V$.  Let $A^*$ denote the adjoint of $A$ with respect to $\phi$, characterized by $\phi (Ax, y)= \phi (x, A^* y)$. If a $A$ is stated to be self-adjoint or skew-adjoint, it is assumed tone with respect to $\phi$.
%If $\psi$ is a symmetric bilinear form on $\phi$, then $\psi (x,w) \psi (y,z) - \psi (x,z) \psi (y,w)$ is an algebraic curvature tensor. We write $R^S_{\psi}$. We call these the canonical algebraic curvature tensors. Similarly, if $\tau$ is an anti-symmetric bilinear form on $\phi$, then $\tau (x,w) \tau (y,z) - \tau (x,z) \tau (y,w)- 2 \tau (x,y) \tau(z,w)$ is an algebraic curvature tensor, we write $R^{\Lambda}_{\tau}$, and we call these the canonical algebraic curvature tensors.

\begin{defn}
Let $A$ and $B$ be a linear endomorphism of $V$ and $\phi$ the inner product. The \emph{symmetric build tensor with respect to $A$} is 
$$R^S_{A}(x,y,z,w)= \phi (Ax,w) \phi (Ay,z) - \phi (Ax,z) \phi (Ay,w).$$
The \emph{anti-symmetric build tensor with respect to B} is $$R^{\Lambda}_{B} (x,y,z,w) =\phi (Bx,w) \phi (By,z) - \phi (Bx,z) \phi (By,w) -2 \phi (Bx,y) \phi (Bz,w).$$
\end{defn}

$R^S_ {A}$ and $R^{\Lambda}_{B}$ satisfy the first equation in Definition \ref{eq:span}. $R^S_ {A} \in \mathcal{A}(V)$ if and only if $A = A^*$ \cite{GilkeyB}, \cite{DiazDunn}. If rank$(B) >2$,$R^{\Lambda}_{B} \in \mathcal{A}(V)$ if and only if $B= -B^*$  \cite{DiazDunn}, \cite{Diroff}. Then when $A=A^*$, $R^S_ {A}$ is called canonical and when $B=-B^*$, $R^{\Lambda}_{B}$ is called canonical.  Unless explicitly stated, we will assume that $R^S_A$ and $R^{\Lambda}_B$ are canonical algebraic curvature tensors.

We develop an identity that relates $R^{\Lambda}_A$ and $R^S_B$,  in Theorem  \ref{id:1}. We use that identity to prove Theorem \ref{le:3.2},  that for invertible $A:V \to V$, $A$ preserves $R^{\Lambda}_{B}$ under precomposition  if and only if $A$ preserves the bilinear form determined by $B$, up to a sign. Thus the structure groups of $R^{\Lambda}_B$ and $B$, denoted $G_{R^{\Lambda}_B}$ and $G_{B}$ respectively, are equivalent up to a sign.

This relates to results of Dunn, Franks, and Palmer, regarding the structure group of symmetric canonical curvature tensors, denoted $G_{R^S_{A}}$ \cite{DunnFranksPalmer}. The manner in which the symmetric and antisymmetric cases differ is interesting, as  $G_{R^S_{A}}= G_A$ if the signature of the inner product is balanced and $G_{R^S_{A}}= G^{\pm}_A$ only when the signature of the inner product is balanced.

Diaz and Dunn determine that under certain assumptions, commutative of the operators of the canonical algebraic curvature tensors is a necessary condition for the linear dependence of three symmetrically built  canonical curvature tensors, where one is with respect to the identity \cite{DiazDunn}. In Section 4, the identity that we developed allowed us to extend their result on commutivity to sets that contain both symmetric and anti-symmetric canonical algebraic curvature tensors. 

We call a set of algebraic curvature tensors \emph{ properly linearly dependent} if none of its proper subsets are linearly dependent. It suffices to consider proper linear dependence because in our cases the proper subsets have been already shown to be linearly dependent.
%The notion of diagonalization of self-adjoint operators translates to block diagonalization of $2 \times 2$ skew-symmetric blocks down the diagonal with zeros elsewhere for skew-adjoint operators. As this is the only block-diagonalization we consider, we will refer to it simply as\emph{ skew-block diagonalization}. 

We complete the classification of sets of three canonical algebraic curvature tensors (see \cite{Diroff} and  \cite{DiazDunn} for two cases). We prove that given basic rank conditions, $\{R^S_{I}, R^S_{B}, R^{\Lambda}_{C} \}$ and $\{R^S_{I}, R^{\Lambda}_{C}, R^{\Lambda}_{D} \}$ are linearly independent if respectively $\{ I, B, C \}$ and $\{I, C, D\}$ are linearly independent. Since there exist nontrivial $A$, $B$ symmetric such that $\{ I,A,B\}$ is linearly independent and $\{R^S_I, R^S_A, R^S_B \}$ is linearly dependent \cite{DiazDunn}, the above results indicate that sets of the same build are optimal for minimally expressing $R$ as a sum of canonical algebraic curvature tensors. 

To contrast the hypothesis of full rank which is usually seen \cite{DiazDunn},  we consider sets of canonical algebraic curvature tensors where any of the operators are allowed to have nontrivial kernels in Section 5. We consider cases where the operators of the canonical curvature tensors form a chain complex. Interestingly, the results hold whether each canonical curvature tensor is the symmetric or anti-symmetric build.

For example, if $A$, $B$, and $C$ are each symmetric or anti-symmetric linear endomorphism of $V$, in the following chain complex, and $aR_A \pm bR_B \pm cR_C=0$ for $a,b,c \in \mathbb{R}$ and nonzero, then $\{A, B, C\}$ is linearly dependent. Moreover, if $\emph{Rank }(A) \ge 4$ and $\emph{Rank }(C) \ge 4$, then  $C= \pm A$, and  $\delta =-1$. Furthermore, if the chain complex is an exact sequence and $B=-B^*$, then $A$ and $C$ are invertible.

\vspace{-5mm}
\begin{center}
\begin{diagram}
V &\rTo^{A} & V & \rTo^{B} & V & \rTo^{C} & V
\end{diagram}
\end{center}
We consider a more general arrangement of the operators, where $A, {B_1}, ..., {B_k}$ are arranged as $k$ sets of chain complexes each of length two, so either $\emph{Im } A \subseteq \emph{ker} B_i$ for all $i$, or $\emph{ker} A \subseteq \emph{Im } B_i $ for all $i$.  Then if $A, {B_1}, ..., {B_k}$ are each symmetric or anti-symmetric linear operators and  $0= aR_A+ \sum^k \pm b_i R_{B_i}$ for $a, b_i \in \mathbb{R}$ nonzero, then $R_A=0$. Moreover, if $A=-A^*$ then for each sequence that is exact, the corresponding $B_i$ is invertible. 

The linear independence of sets of four canonical curvature tensors has not been considered. We consider a set of four symmetric or anti-symmetric canonical curvature tensors, where the operators are arranged in a chain complex and it is very restrictive. 

The maximum number of $R^S_A$ required to write any $R$ as a sum of $R^S_A$ in a given $V$ of dimension $n$, is denoted $\nu(n)$  \cite{DiazGarcia} (for $R_B^{\Lambda}$, the number is denoted by $\eta(n)$ \cite{Diroff}). The numbers $\nu (R)$ and $\eta (R)$ provide better lower bounds for $\nu(n)$ and $\eta(n)$, defined as follows:

\begin{defn} Let $R$ denote an algebraic curvature tensor. Then 
\vspace{-5mm}
\begin{center}
\item $\nu(R) = min \{ k |   R= \sum^k_{i=0} a_i R_{A_i} \text{ for } a_i \in \mathbb{R} \text{ and } A_i=A_i^* \},$ and
\item $\eta(R) = min \{ k |  R= \sum^k_{i=0} a_i R_{B_i} \text{ for } a_i \in \mathbb{R} \text{ and } B_i=-B_i^* \}.$
\end{center}
\end{defn}

Letting $dim(V) =n$, $$\nu(n) := sup_{R \in \mathcal{A}(V)} \nu(R) \text{   \hspace{5mm} and   \hspace{5mm}  }  \eta(n) := sup_{R \in \mathcal{A}(V)} \eta(R).$$
Diaz-Ramos and Garcia-Rio prove that for $\emph{dim}V = n$, $\nu (n) \le n(n+1)/2$ \cite{DiazGarcia}.  Although the dimension of $\mathcal{A}(V)$ is $\frac{1}{12} n^2 (n^2-1)$, the bound is still far from optimal. The authors prove that for $n=3$, an algebraic curvature tensor requires at most two symmetric canonical algebraic curvature tensors to express it \cite{DiazGarcia}. Thus, there is interest in further reducing this bound. 

Our interest in relating $R^{\lambda}_B$ and $R^S_B$ is motivated by an interest in relating $\eta(R)$ and $\nu (R)$. One approach for doing this is by developing methods for reducing $\nu (R^{\Lambda}_A)$ and $\eta(R^S_A)$, given that one of the operators has a nontrivial kernel (in section 6). Each algebraic curvature tensor may be symmetric or antisymmetric build, and so written without superscript. Consider $R_C =\pm a R_B + \sum^k  \pm a_i R_{B_i}$, where $a, a_i \in \mathbb{R}$ and $\emph{ker}( B ) \ne 0$. If $A:V \to \emph{ker} ( B$) such that $A * (Bx, By) = (Bx, By)$ , then $R_{C} = \sum^k \epsilon_i R_{A^* B_i A}$. Moreover, $R_{A^* B_i A} \in \mathcal{A}(V)$, for both $B_i = B_i^*$ and $B_i = - B_i^*$. Thus, the same canonical curvature tensor is re-expressed as a sum of canonical curvature tensors with one fewer terms. If we apply this method to a curvature tensor of one type, expressed as a sum of another type, then this method reduces $\eta (R^S_B)$ or $\nu (R^{\Lambda}_B)$. 

As a more general case, we do not require $A$ to preserve any of the operators. This provides a method for reducing $\nu(R)$ and $\eta (R)$, given that at least one of the operators has a nontrivial kernel. If $R= \epsilon R_B + \sum^k \epsilon_i R_{B_i} $, where $\emph{ker} ( B ) \ne 0$. Then, for $A: V \to \emph{ker}(\tau)$, $\bar{R}=A*R = \sum^k \epsilon_i R_{A^* B_i A}$. Moreover, $R_{A^* B_i A} \in \mathcal{A}(V)$, for $B_i =B_i^*$ or $B_i=-B_i^*$. 

In both cases, the kernels each $B_i$ or $A_i$ in the sum  of their terms are aligned, as they contain $\emph{ker}A$. Since these methods extend to sums of both builds of curvature tensors, it provides motivation for introducing a new bound, $\mu(R),$ which allows the sum to be of both symmetric and anti-symmetric canonical algebraic curvature tensors (defined in Section 6).

%We include a useful lemma by Gilkey, \cite{GilkeyH}
%\begin{lemma} \label{act}
%Let $A$ be a linear map. Then $R^S_A \in \mathcal{A}(V)$ if an only if $A=A^*$ and $R^{\Lambda}_A \in \mathcal{A}(V)$ if and only if $A=-A^*$.
%\end{lemma}

\section{An Identity Relating the Symmetric and Anti-symmetric Build Canonical Algebraic Curvature Tensors}

We develop an identity for an anti-symmetric canonical algebraic curvature tensor in terms of symmetric build tensors. First we include the following two lemmas for completeness.

\begin{lemma} \label{le:dd}
Let $\phi$ be the inner product and $A \in L(V)$. Then for all $x, y, z, w \in V$, $$R^S_A(x,y,z,w)= R^S_{\phi} (Ax, Ay, z,w)= R^S_{\phi} (x, y, A^* z, A^* w).$$
\end{lemma}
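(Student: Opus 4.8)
The plan is to verify both equalities by direct expansion, since each side is just a product of evaluations of the positive definite form $\phi$. I read $R^S_\phi$ as the symmetric build tensor taken with respect to the identity operator, so that $R^S_\phi(p,q,z,w) = \phi(p,w)\phi(q,z) - \phi(p,z)\phi(q,w)$. For the first equality, I would substitute $Ax$ and $Ay$ in place of the first two arguments of $R^S_\phi$. Expanding $R^S_\phi(Ax, Ay, z, w)$ then reproduces $\phi(Ax,w)\phi(Ay,z) - \phi(Ax,z)\phi(Ay,w)$ term by term, which is exactly the definition of $R^S_A(x,y,z,w)$; so this equality is immediate from the definitions.

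For the second equality, I would expand $R^S_\phi(x, y, A^*z, A^*w) = \phi(x, A^*w)\phi(y, A^*z) - \phi(x, A^*z)\phi(y, A^*w)$ and then apply the defining property of the adjoint, $\phi(x, A^*u) = \phi(Ax, u)$, to each of the four factors. This converts every factor of the form $\phi(\,\cdot\,, A^*\,\cdot\,)$ into the corresponding $\phi(A\,\cdot\,, \,\cdot\,)$, yielding $\phi(Ax, w)\phi(Ay, z) - \phi(Ax, z)\phi(Ay, w) = R^S_A(x,y,z,w)$.

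Since both computations are purely formal substitutions, there is no genuine obstacle here; the only point requiring a bit of care is keeping track of which argument carries the adjoint when matching factors back to the definition of $R^S_A$. I would also note that no additional hypothesis on $A$ (such as self-adjointness) is used, so the identity holds for an arbitrary $A \in L(V)$, consistent with the stated hypothesis.
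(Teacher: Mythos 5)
Your proposal is correct: both equalities follow by direct expansion of the definitions and one application of the adjoint property $\phi(Ax,u)=\phi(x,A^*u)$, which is precisely the "straightforward" computation the paper defers to the reference \cite{DiazDunn}. No hypothesis on $A$ beyond $A\in L(V)$ is needed, as you note, so the argument matches the stated generality.
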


\begin{proof}
The proof is straightforward and can be found in \cite{DiazDunn}. 
\end{proof}

\begin{lemma} \label{tensors}
If $Rank(A) \ge 3$, then $R_A^{S} \in \mathcal{A}(V)$ and  if and only if $A=A^*$. Also, $R_A^{\Lambda} \in \mathcal{A}(V)$ if and only if $A=-A^*$.
 \end{lemma}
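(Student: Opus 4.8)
The plan is to verify the two defining properties of an algebraic curvature tensor — the symmetries in part (1) and the first Bianchi identity in part (2) — for each build, handling the forward and reverse implications separately. Throughout write $a_{uv} := \phi(Au,v)$, so that $R^S_A(x,y,z,w) = a_{xw}a_{yz} - a_{xz}a_{yw}$ and $R^{\Lambda}_A(x,y,z,w) = a_{xw}a_{yz} - a_{xz}a_{yw} - 2a_{xy}a_{zw}$; self-adjointness of $A$ is the statement $a_{uv} = a_{vu}$ and skew-adjointness is $a_{uv} = -a_{vu}$. I would first dispose of the sufficiency directions, which are routine term-matching calculations requiring no rank hypothesis: when $a_{uv} = a_{vu}$ the pair-interchange symmetry of $R^S_A$ and the vanishing of its cyclic Bianchi sum both follow by inspection, and when $a_{uv} = -a_{vu}$ the same bookkeeping — now exploiting the $-2$ coefficient — shows $R^{\Lambda}_A$ is antisymmetric in each pair and that its Bianchi sum telescopes to $0$.

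The substance is in the necessity directions. For the symmetric build, suppose $R^S_A \in \mathcal{A}(V)$ and hence satisfies Bianchi. Expanding the cyclic sum $R^S_A(x,y,z,w) + R^S_A(z,x,y,w) + R^S_A(y,z,x,w)$ and collecting terms by the coefficients $a_{xw}, a_{zw}, a_{yw}$, everything pairs off except a remainder governed by the skew part $s_{uv} := a_{uv} - a_{vu} = \phi((A - A^*)u, v)$; the Bianchi identity becomes $a_{xw}s_{yz} + a_{zw}s_{xy} + a_{yw}s_{zx} = 0$ for all $x,y,z,w$. Reading this as a linear condition in $w$ and invoking nondegeneracy of $\phi$, I would rewrite it as the operator identity
\[
s_{yz}\,Ax + s_{xy}\,Az + s_{zx}\,Ay = 0 \qquad \text{for all } x,y,z \in V .
\]
Now comes the key step, and the place where $\mathrm{Rank}(A) \ge 3$ is essential: if $A \ne A^*$ then $s \not\equiv 0$, so there are $y_0, z_0$ with $s_{y_0 z_0} \ne 0$, and fixing these forces $Ax \in \mathrm{span}\{Ay_0, Az_0\}$ for every $x$, i.e.\ $\mathrm{Rank}(A) \le 2$, a contradiction. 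Hence $s \equiv 0$ and $A = A^*$.

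For the anti-symmetric build, suppose $R^{\Lambda}_A \in \mathcal{A}(V)$, so in particular $R^{\Lambda}_A(x,y,z,w) = -R^{\Lambda}_A(y,x,z,w)$. The first two summands of $R^{\Lambda}_A$ already antisymmetrize correctly under $x \leftrightarrow y$, so swapping and adding isolates exactly the cross term: the antisymmetry forces $(a_{xy} + a_{yx})\,a_{zw} = 0$, equivalently $\phi((A + A^*)x, y)\,\phi(Az,w) = 0$, for all $x,y,z,w$. Since $\mathrm{Rank}(A) \ge 3$ guarantees $A \ne 0$, there exist $z,w$ with $a_{zw} \ne 0$, whence $\phi((A+A^*)x,y) = 0$ for all $x,y$ and therefore $A = -A^*$ by nondegeneracy of $\phi$.

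I expect the main obstacle to be the symmetric necessity argument — specifically, the passage from the scalar Bianchi relation to the operator identity and then to the rank bound. The conversion via nondegeneracy is clean, but one must be careful that the contradiction genuinely requires rank at least $3$: for $\mathrm{Rank}(A) = 2$ the containment $\mathrm{Im}(A) \subseteq \mathrm{span}\{Ay_0, Az_0\}$ is vacuous, which is exactly why the hypothesis cannot be weakened. The anti-symmetric necessity, by contrast, is governed entirely by the cross term and needs only $A \ne 0$; the sufficiency verifications, while unenlightening, should be written out at least schematically to confirm that the $-2$ coefficient in $R^{\Lambda}_A$ is precisely what makes the skew-symmetric computation close.
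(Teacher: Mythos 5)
Your proposal is correct. Note, however, that the paper does not actually prove this lemma: it simply defers to Gilkey and Diroff, so there is no in-paper argument to compare against. Your self-contained proof follows the standard route and all the key steps check out: the Bianchi sum for $R^S_A$ does collapse to $a_{xw}s_{yz}+a_{zw}s_{xy}+a_{yw}s_{zx}$ with $s_{uv}=\phi((A-A^*)u,v)$, nondegeneracy of $\phi$ (which is positive definite here) legitimately converts this to the operator identity $s_{yz}Ax+s_{xy}Az+s_{zx}Ay=0$, and the rank bound $\mathrm{Im}(A)\subseteq\mathrm{span}\{Ay_0,Az_0\}$ gives the contradiction with $\mathrm{Rank}(A)\ge 3$. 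The anti-symmetric necessity via the first-slot antisymmetry isolating $(a_{xy}+a_{yx})a_{zw}=0$ is likewise sound and, as you observe, needs only $A\ne 0$. Two small points: in the sufficiency direction you should make sure the pair-interchange symmetry $R(x,y,z,w)=R(z,w,x,y)$ is explicitly verified alongside the first-slot antisymmetry and Bianchi (you gesture at this, and it does hold by the term-matching you describe); and your closing remark that the hypothesis ``cannot be weakened'' because the containment is vacuous at rank $2$ only shows that \emph{this proof} breaks down there, not that the statement itself fails --- that would require exhibiting a rank-$2$ counterexample, which exists but is not supplied. Neither point affects the validity of the proof of the lemma as stated.
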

 \begin{proof}
 The proofs are straightforward and can be found in \cite{GilkeyH} and  \cite{Diroff}, respectively.
 \end{proof} 

\begin{lemma} \label{le:t}
Let $\phi$ be the inner product, $A \in L(V)$. Then $$R^{\Lambda}_A(x,y,z,w)= R^S_A (x,y,z,w) - 2 \phi(Ax,y) \phi (Az,w).$$
\end{lemma}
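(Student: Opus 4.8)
The plan is to prove the identity by direct comparison of the two defining expressions, with no appeal to the curvature symmetries or to any adjointness hypothesis on $A$. Recalling the definitions of the two build tensors, the symmetric build is
$$R^S_{A}(x,y,z,w)= \phi (Ax,w) \phi (Ay,z) - \phi (Ax,z) \phi (Ay,w),$$
while the anti-symmetric build is
$$R^{\Lambda}_{A} (x,y,z,w) =\phi (Ax,w) \phi (Ay,z) - \phi (Ax,z) \phi (Ay,w) -2 \phi (Ax,y) \phi (Az,w).$$

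First I would observe that the first two summands in the expression for $R^{\Lambda}_A$ are verbatim the two summands defining $R^S_A$, evaluated at the same tuple $(x,y,z,w)$ and with the same operator $A$. Hence subtracting, the difference $R^{\Lambda}_A - R^S_A$ collapses to the single leftover term $-2\,\phi(Ax,y)\,\phi(Az,w)$. Rearranging gives $R^{\Lambda}_A(x,y,z,w) = R^S_A(x,y,z,w) - 2\,\phi(Ax,y)\,\phi(Az,w)$, which is exactly the claim. No reindexing of arguments, use of bilinearity of $\phi$, or symmetry of the inner product is needed, since every term already stands over the same arguments.

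I expect essentially no obstacle: the statement is a bookkeeping rearrangement of the two definitions, and the only point worth flagging is that the common operator $A$ must appear in both builds, which is granted by the hypothesis $A \in L(V)$. In particular the identity is purely formal and holds for an arbitrary linear endomorphism, regardless of whether $A$ is self-adjoint or skew-adjoint and hence regardless of whether either $R^S_A$ or $R^{\Lambda}_A$ lies in $\mathcal{A}(V)$; that membership question is governed separately by Lemma \ref{tensors}. This elementary identity is precisely the bridge that will later let methods developed for the symmetric build $R^S_A$ be transported to the anti-symmetric build $R^{\Lambda}_A$.
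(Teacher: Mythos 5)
Your proposal is correct and matches the paper's proof: both simply expand the definition of $R^{\Lambda}_A$, recognize its first two terms as the definition of $R^S_A$ (the paper inserts one intermediate rewriting via $R^S_{\phi}(Ax,Ay,z,w)$, which you rightly skip), and read off the leftover term. Your observation that no adjointness hypothesis on $A$ is needed is also consistent with the paper, which states the lemma for arbitrary $A \in L(V)$.
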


\begin{proof}
\begin{align*}
R^{\Lambda}_A(x,y,z,w)
& = \phi (Ax,w) \phi (Ay,z) - \phi (Ax,z) \phi (Ay,w) -2 \phi (Ax,y) \phi (Az,w) \\
& = R^S_{\phi} (Ax, Ay, z,w) -2 \phi (Ax,y) \phi (Az,w) \\
& = R^S_A(x,y,z,w) - 2 \phi (Ax, y) \phi (Az, w).
\end{align*}

\end{proof}
 We can now prove our main result of this section, the identity that is mentioned above:

\begin{theorem} \label{id:1}
Let $A=-A^*$.  Then,

\begin{align*}
R^{ \Lambda}_A(x,y,z,w)
& = 2R^S_A(x,y,z,w)+R_A^S(x,z,y,w)+R_A^S(x,w,z,y) \\
& = 2R_{\phi}^S (Ax, Ay, z,w) + R^S_{\phi} (Ax,Az,y,w) +R^S_{\phi} (Ax,Aw, z,y). 
\end{align*}
\end{theorem}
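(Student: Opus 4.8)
The plan is to reduce the claim to Lemma~\ref{le:t} together with a single algebraic cancellation that is driven by the skew-adjointness of $A$. By Lemma~\ref{le:t} we already have $R^{\Lambda}_A(x,y,z,w) = R^S_A(x,y,z,w) - 2\phi(Ax,y)\phi(Az,w)$, so the first displayed equality is equivalent to the identity
$$R^S_A(x,y,z,w) + R^S_A(x,z,y,w) + R^S_A(x,w,z,y) = -2\phi(Ax,y)\phi(Az,w).$$
Indeed, adding $R^S_A(x,y,z,w)$ to both sides turns the left side into $2R^S_A(x,y,z,w) + R^S_A(x,z,y,w) + R^S_A(x,w,z,y)$ and, via Lemma~\ref{le:t}, the right side into $R^S_A(x,y,z,w) - 2\phi(Ax,y)\phi(Az,w) = R^{\Lambda}_A(x,y,z,w)$, which is exactly the asserted formula.

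First I would expand the three symmetric terms using the definition of $R^S_A$, producing six products of the shape $\phi(A\,\cdot\,,\cdot)\,\phi(A\,\cdot\,,\cdot)$. To keep the bookkeeping manageable I would introduce the auxiliary antisymmetric form $\langle x,y\rangle := \phi(Ax,y)$; the hypothesis $A=-A^*$ together with the symmetry of $\phi$ gives $\langle x,y\rangle = \phi(Ax,y) = \phi(x,A^*y) = -\phi(x,Ay) = -\langle y,x\rangle$. Rewriting the six monomials in terms of $\langle\cdot,\cdot\rangle$ and using this antisymmetry to flip the reversed arguments, I expect four of the six products to cancel in pairs, with the remaining two each equal to $-\langle x,y\rangle\langle z,w\rangle$, so that their sum is $-2\langle x,y\rangle\langle z,w\rangle = -2\phi(Ax,y)\phi(Az,w)$, which is precisely the right-hand side above.

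The second displayed equality is then immediate from Lemma~\ref{le:dd}, which states $R^S_A(u,v,z,w) = R^S_\phi(Au,Av,z,w)$. Applying it termwise to $2R^S_A(x,y,z,w)$, to $R^S_A(x,z,y,w)$, and to $R^S_A(x,w,z,y)$ moves the operator into the first two slots and produces the three $R^S_\phi$ terms with arguments $(Ax,Ay,z,w)$, $(Ax,Az,y,w)$, and $(Ax,Aw,z,y)$, respectively.

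The computation is elementary, so there is no genuine obstacle; the one point that demands care is the sign bookkeeping in the cancellation step. Because the three $R^S_A$ terms differ only by permuting $y,z,w$, it is easy to mismatch signs when reindexing, so rather than invoking a symmetry shortcut I would track all six monomials explicitly and verify directly that the surviving pair is the two copies of $-\langle x,y\rangle\langle z,w\rangle$ and not some other combination. Once that cancellation is confirmed, both equalities follow at once.
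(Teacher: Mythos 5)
Your proposal is correct, but it takes a genuinely different route from the paper. The paper's proof starts from the Bianchi identity applied to $R^{\Lambda}_A$ --- which is available because $A=-A^*$ makes $R^{\Lambda}_A$ an algebraic curvature tensor (Lemma~\ref{tensors}) --- writing $R^{\Lambda}_A(x,y,z,w) = -R^{\Lambda}_A(z,x,y,w) - R^{\Lambda}_A(y,z,x,w)$ and then expanding each term via Lemma~\ref{le:t} and recombining with Lemma~\ref{le:dd}. You instead reduce the claim, via Lemma~\ref{le:t}, to the pointwise identity $R^S_A(x,y,z,w) + R^S_A(x,z,y,w) + R^S_A(x,w,z,y) = -2\phi(Ax,y)\phi(Az,w)$ and verify it by brute-force expansion of the six monomials using only the antisymmetry of $\langle x,y\rangle := \phi(Ax,y)$; I checked the cancellation and it works out exactly as you predict (the $\langle x,w\rangle\langle y,z\rangle$ and $\langle x,z\rangle\langle y,w\rangle$ terms cancel in pairs, leaving two copies of $-\langle x,y\rangle\langle z,w\rangle$). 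Your argument is more elementary and self-contained: it does not need the external fact that $R^{\Lambda}_A$ satisfies the Bianchi identity, and in effect it re-derives that fact along the way. The paper's approach is shorter given the cited machinery and makes the structural reason for the identity (the Bianchi symmetry) more visible. Both are valid; the only caveat is that your writeup defers the six-term cancellation to a promised verification, so in a final version you should display that computation explicitly rather than asserting the expected outcome.
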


\begin{proof}

Since $A=-A^*$, $R^{\Lambda}_A$ is an algebraic curvature tensor, so we can use the Bianchi Identity on $R^{\Lambda}_A$, and in combination with Lemmas \ref{le:t} and \ref{le:dd}, 
\begin{align*}
 R_A^{\Lambda} (x,y,z,w)  
& =  -R_A^{\Lambda} (z,x,y,w)-R_A^{\Lambda} (y,z,x,w) \\
& =  -R_A ^{S} (z,x,y,w) +2 \phi (Az,x) \phi (Ay,w)  - R_{A} ^{S} (y,z,x,w) +2 \phi (Ay, z) \phi (Ax, w) \\
& =  2R^S_{\phi}(Ax,Ay,z,w) -R_{A}^S (z,x,y,w)-R_{A}^S(y,z,x,w)  \\
& = 2R^S_A (x,y,z,w) +R^S_A(x,z,y,w) +R_A^S(x,w,z,y)
\end{align*}
\end{proof}

\begin{remark}
For an arbitrary endomorphism of $V$, $A$, $R^S_A(x,y,z,w)$, $R_A^S(x,z,y,w)$, and $R_A^S(x,w,z,y)$ are the symmetric build tensors; they are not necessarily algebraic curvature tensors by Lemma \ref{tensors}.
\end{remark}

\section{The Structure Group of $R_{\tau}$ }

In this section, we examine the relationship between a canonical algebraic curvature tensor and the corresponding bilinear form. Following the notation of \cite{DiazDunn}, we let $A*$ refer to precomposition with $A$, so $A*R_{\psi} = R_{\psi} (Ax, Ay, Az, Aw)$. 
Also, for a bilinear form  $\psi$,  $A* \psi = \psi(Ax, Ay)$. Let $GL(V)$ refer to the general linear group.

\begin{lemma} \label{le:1}

 Let $C=C^*$ and $B=-B^*$, then 
 \begin{equation} \label{eq:1}
  R^S_{C} (Ax, Ay, Az, Aw) = R^S_{A^* C A} (x,y,z,w)
  \end{equation}
  \begin{equation} \label{eq:2}
  R^{\Lambda}_{B} (Ax,Ay,Az,Aw)=R^{\Lambda}_{A^* B A}(x,y,z,w).
\end{equation}
\end{lemma}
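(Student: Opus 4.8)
The plan is to prove both identities by the same mechanism: each build tensor is a polynomial expression in the single ``deformed'' bilinear form $\beta_M(u,v) := \phi(Mu,v)$, where $M$ is the operator ($C$ in the first identity, $B$ in the second), and precomposition with $A$ simply replaces $\beta_M$ by $\beta_{A^*MA}$ in every slot. Concretely, the only tool required is the defining property of the adjoint together with the symmetry of $\phi$: since $\phi(u, Av) = \phi(Av, u) = \phi(v, A^*u) = \phi(A^*u, v)$, every factor of the form $\phi(MAx_i, Ax_j)$ collapses to $\phi(A^*MA\,x_i, x_j)$ after a single application of the adjoint relation. This is the computational heart of the argument, and it is identical for both equations.

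For equation \eqref{eq:1}, I would first expand $R^S_C(Ax,Ay,Az,Aw)$ directly from the definition of the symmetric build tensor, obtaining the difference $\phi(CAx,Aw)\phi(CAy,Az) - \phi(CAx,Az)\phi(CAy,Aw)$. Applying the adjoint identity above to each of the four factors turns each $\phi(CAx_i,Ax_j)$ into $\phi(A^*CA\,x_i,x_j)$, and the resulting expression is term-for-term the definition of $R^S_{A^*CA}(x,y,z,w)$. I would then verify the implicit consistency claim that the right-hand side is again a symmetric build tensor of the asserted type: since $C=C^*$, we have $(A^*CA)^* = A^*C^*A = A^*CA$, so $A^*CA$ is self-adjoint and $R^S_{A^*CA}$ is canonical.

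For equation \eqref{eq:2}, I would repeat the identical expansion for $R^\Lambda_B(Ax,Ay,Az,Aw)$, now carrying the extra term $-2\phi(BAx,Ay)\phi(BAz,Aw)$ present in the anti-symmetric build. Because the adjoint substitution acts on this third term exactly as it does on the other two, converting $\phi(BAx_i,Ax_j)$ into $\phi(A^*BA\,x_i,x_j)$, the whole expression reassembles into $R^\Lambda_{A^*BA}(x,y,z,w)$. Finally, using $B=-B^*$, one checks $(A^*BA)^* = A^*B^*A = -A^*BA$, so $A^*BA$ is skew-adjoint and the output is a canonical anti-symmetric tensor. The main point worth flagging is not an analytic obstacle but a bookkeeping one: the adjoint must be applied to the correct ($A$-laden) slot in each of the six factors, and one must confirm that the symmetry hypotheses on $C$ and $B$ are used precisely to guarantee that $A^*CA$ and $A^*BA$ retain the self-adjoint and skew-adjoint character, respectively. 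Everything else is mechanical, and one could alternatively route equation \eqref{eq:1} through Lemma \ref{le:dd}, but the direct adjoint computation is shorter and handles both cases uniformly.
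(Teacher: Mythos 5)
Your proof is correct, but it takes a more elementary route than the paper for equation \eqref{eq:2}. The paper disposes of \eqref{eq:1} by citation to Diaz--Dunn, and for \eqref{eq:2} it deliberately routes the computation through Theorem \ref{id:1}: it rewrites $R^{\Lambda}_{B}$ as $2R^S_B(x,y,z,w)+R^S_B(x,z,y,w)+R^S_B(x,w,z,y)$, pushes $A$ through each symmetric summand via the $R^S_{\phi}$ form of Lemma \ref{le:dd}, and then reassembles. You instead expand both build tensors from their definitions and apply the single adjoint manipulation $\phi(MAu,Av)=\phi(A^*MAu,v)$ to every factor, which handles the two-term symmetric build and the three-term anti-symmetric build uniformly. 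Your argument is shorter, self-contained, and slightly more general: the identity \eqref{eq:2} as you prove it holds for an arbitrary endomorphism $B$, with the hypothesis $B=-B^*$ needed only to conclude (as you correctly note via $(A^*BA)^*=-A^*BA$) that both sides are canonical algebraic curvature tensors; the paper's route genuinely requires $B=-B^*$ from the outset, since Theorem \ref{id:1} is proved using the Bianchi identity. What the paper's approach buys is thematic rather than logical economy: it showcases the new identity of Theorem \ref{id:1} as the tool that transfers known symmetric-build facts to the anti-symmetric setting, which is the organizing idea of the paper. Either proof is acceptable.
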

 
\begin{proof}
% Then, by Lemma \ref{le:dd}
 %\begin{align*}
%R^S_{C} (Ax, Ay, Az, Aw)
%&= R^S_{\phi} (C Ax, C Ay, Az, Aw) \\
%&= R^{S}_{\phi} (A^* C A x, A^* C A y, z,w) \\
%&= R^S_{A^* C A} (x,y,z,w). 
%\end{align*}
  The proof  for \eqref{eq:1} is straightforward and can be found in \cite{DiazDunn}. For \eqref{eq:2} we use Theorem  \ref{id:1} and are then able to use the relations between the symmetric build curvature tensors and their operators. 

 Let $B=-B^*$. Then,
\begin{align*}
A*R^{\Lambda}_{B}
&= R^{\Lambda}_{B} (Ax,Ay,Az,Aw) \\
& = 2R^{S}_{B} (Ax, Ay, Az, Aw) + R^S_{B} (Ax, Az, Ay, Aw) + R^S_{B} (Ax, Aw, Az, Ay) \\
& = 2R^S_{\phi} (B Ax, B Ay, Az, Aw) + R^S_{\phi} (B Ax, B Az, Ay, Aw) + R^S_{\phi} (B Ax, B Aw, Az, Ay) \\
& = 2 R^S_{\phi} ( A^* B Ax, A^* B Ay, z,w) + R^S_{\phi} (A^* B A x, A^* B A z, y, w) + R^S_{\phi} (A^*  B A x, A^* B A w, z, y) \\
& = 2 R^S_{ A^* B A} (x,y,z,w) + R^S_{A^* B A} (x,z,y,w) + R^S_{ A^* B A} (x,w,z,y) \\
& =  R^{\Lambda}_{A^* B A} (x,y,z,w). 
\end{align*}
\end{proof}

For all $\phi(Ax,y)$, where $A$ is a symmetric linear operator, there exists a symmetric bilinear form $\psi$, such that $\psi (x, y) = \phi (Ax, y)$. Likewise, for all $\phi(Bx, y)$, where $B$ is an anti-symmetric linear operator, there exists an anti-symmetric bilinear form $\tau$, such that $\tau (x, y) = \phi (Bx, y)$. Then $R^S_A$ and $R^{\Lambda}_B$ are equivalent to $R^S_{\psi}$ and $R^{\Lambda}_{\tau}$, respectively. 

In the rest of this section we use $R^{\Lambda}_{\tau}$, where $\tau$ is an anti-symmetric bilinear form. 

\begin{definition}  Let $A \in GL(V^*)$ and let $\tau$ be an anti-symmetric bilinear form. The structure groups of $R_{\tau}$ and $\tau$ are $$ G_{R^{\Lambda}_{\tau}} = \{ A | A*R^{\Lambda}_{\tau} = R^{\Lambda}_{ \tau} \},$$ $$G_{\tau}= \{A | A* \tau = \tau \} \text{, and }$$  $$G^{ \pm }_{\tau}=\{A | A* \tau = \pm  \tau \}.$$
\end{definition}

\begin{theorem} \label{le:3.2}
For $\tau$ an anti-symmetric bilinear form with $Rank \tau \ge 4$, $G_{R^{\Lambda}_{\tau}}=G^{ \pm}_{\tau}$.
\end{theorem}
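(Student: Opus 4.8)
The plan is to reduce the structure-group identity to an injectivity statement about the build map $\tau \mapsto R^{\Lambda}_{\tau}$. The central observation is the bilinear-form version of equation \eqref{eq:2} in Lemma \ref{le:1}: for any $A \in GL(V)$ and any anti-symmetric bilinear form $\tau$ one has $A*R^{\Lambda}_{\tau} = R^{\Lambda}_{A*\tau}$, where $A*\tau(x,y) = \tau(Ax,Ay)$ is again anti-symmetric. Consequently $A \in G_{R^{\Lambda}_{\tau}}$ if and only if $R^{\Lambda}_{A*\tau} = R^{\Lambda}_{\tau}$, and the whole problem becomes: for anti-symmetric forms $\sigma$ and $\tau$ (here $\sigma = A*\tau$), does $R^{\Lambda}_{\sigma} = R^{\Lambda}_{\tau}$ force $\sigma = \pm\tau$?

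For the inclusion $G^{\pm}_{\tau} \subseteq G_{R^{\Lambda}_{\tau}}$ I would simply note that each term of $R^{\Lambda}_{\tau}$ is of degree two in $\tau$, so $R^{\Lambda}_{-\tau} = R^{\Lambda}_{\tau}$. Thus $A*\tau = \pm\tau$ immediately yields $A*R^{\Lambda}_{\tau} = R^{\Lambda}_{A*\tau} = R^{\Lambda}_{\pm\tau} = R^{\Lambda}_{\tau}$, so $A \in G_{R^{\Lambda}_{\tau}}$. This direction is routine.

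The reverse inclusion is the heart of the matter, and I would establish the injectivity-up-to-sign claim directly. Evaluating the build on a repeated pair of slots and using anti-symmetry ($\tau(x,x)=0$ and $\tau(y,x)=-\tau(x,y)$) gives the key identity $R^{\Lambda}_{\tau}(x,y,x,y) = -3\,\tau(x,y)^2$. Hence $R^{\Lambda}_{\sigma} = R^{\Lambda}_{\tau}$ forces $\sigma(x,y)^2 = \tau(x,y)^2$ for all $x,y \in V$. It then remains to upgrade this pointwise sign ambiguity to a single global sign. Since $\mathrm{Rank}\,\tau \ge 4 > 0$, I fix a pair $(x_0,y_0)$ with $\tau(x_0,y_0) \ne 0$ and, after possibly replacing $\sigma$ by $-\sigma$, assume $\sigma(x_0,y_0) = \tau(x_0,y_0)$. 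Expanding $\sigma(x_0, y_0 + y)^2 = \tau(x_0, y_0 + y)^2$ and cancelling the equal square terms yields $\sigma(x_0,y) = \tau(x_0,y)$ for all $y$; a symmetric expansion in the first slot then propagates agreement to $V \times \{y_1\}$ for every $y_1$ off the hyperplane $\ker \tau(x_0,\cdot)$, and bilinearity—writing any $y_1$ in that hyperplane as $(y_1+y_0)-y_0$—extends it to $\sigma = \tau$ everywhere. This gives $A*\tau = \pm\tau$, i.e.\ $A \in G^{\pm}_{\tau}$.

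The main obstacle is precisely this globalization of the sign: the quadratic nature of the build only recovers $\tau$ pointwise up to sign, and one must rule out a sign that varies with the arguments. The rank hypothesis enters in two ways. It guarantees $\tau \ne 0$, so a base pair $(x_0,y_0)$ exists for the globalization; and, because anti-symmetric forms have even rank, $\mathrm{Rank}\,\tau \ge 4$ is equivalent to $\mathrm{Rank}\,\tau > 2$, which by Lemma \ref{tensors} is exactly what ensures that $R^{\Lambda}_{\tau}$ (and $R^{\Lambda}_{A*\tau}$, whose operator $A^*BA$ has the same rank as that of $\tau$ for invertible $A$) is a genuine algebraic curvature tensor—legitimizing the application of Lemma \ref{le:1}, whose proof relies on the Bianchi-based identity of Theorem \ref{id:1}.
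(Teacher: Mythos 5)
Your proof is correct, and the hard direction is handled by a genuinely different (and more self-contained) argument than the paper's. Both proofs start identically: Lemma \ref{le:1} gives $A*R^{\Lambda}_{\tau}=R^{\Lambda}_{A*\tau}$, the easy inclusion $G^{\pm}_{\tau}\subseteq G_{R^{\Lambda}_{\tau}}$ follows from quadraticity in $\tau$, and the problem reduces to showing $R^{\Lambda}_{\sigma}=R^{\Lambda}_{\tau}\Rightarrow\sigma=\pm\tau$ for anti-symmetric forms. At that point the paper simply cites a result of Gilkey for this implication, whereas you prove it from scratch: the evaluation $R^{\Lambda}_{\tau}(x,y,x,y)=-3\tau(x,y)^2$ forces $\sigma(x,y)^2=\tau(x,y)^2$ pointwise, and your polarization argument (expanding $\sigma(x_0,y_0+y)^2=\tau(x_0,y_0+y)^2$, cancelling the equal squares, then propagating through the second and first slots and using bilinearity on the hyperplane $\tau(x_0,\cdot)=0$) correctly upgrades the pointwise sign to a global one. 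I checked the identity and each propagation step; they are sound. What your route buys is transparency and a weaker hypothesis: your argument needs only $\tau\neq 0$, so the $\mathrm{Rank}\,\tau\ge 4$ assumption is not actually used except insofar as it guarantees nonvanishing (in the paper it is inherited from the hypotheses of the cited Gilkey result). One small correction to your closing discussion: the rank hypothesis is not what legitimizes Lemma \ref{le:1}; equation \eqref{eq:2} there requires only that the operator be skew-adjoint (so that Theorem \ref{id:1}, hence the Bianchi identity, applies), which holds automatically since $\tau$ and $A*\tau$ are anti-symmetric. This mis-attribution does not affect the validity of the proof.
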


\begin{proof}
From Lemma \ref{le:1}, $A* R^{\Lambda}_{\tau}=R^{\Lambda}_{A* \tau}.$ Thus, if $A* \tau = \pm \tau$, then $A* R_{\tau} = R_{\tau}$ and so $G^{\pm}_{\tau} \subseteq  G_{R_{\tau}}$. 
Now, let $A \in G_{R^{\Lambda}_{\tau}}$ and so $A*R^{\Lambda}_{\tau} = R^{\Lambda}_{\tau}$. Then $$ R^{\Lambda}_{\tau}= A*R^{\Lambda}_{\tau} = R^{\Lambda}_{A* \tau}.$$ We apply a result of Gilkey \cite{GilkeyH},  
that $R^{\Lambda}_{A* \tau }= R^{\Lambda}_{\tau}$ implies that $A* \tau = \pm \tau$, giving the containment $G_{R_{\tau}} \subseteq R^{\pm}_{\tau}$.

\end{proof}

It is interesting to compare this result with the case of a symmetric bilinear form $\psi$. Dunn, Franks and Palmer \cite{DunnFranksPalmer} proved that for Rank  $\psi \ge 2$ , then $G_{R_{\psi}} = G_{\psi}$ if the signature of $\psi$ is imbalanced and $G_{R_{\psi}} = G_{\psi}^{\pm}$ if the signature of $\psi$ is balanced.  Thus, our result (which is independent of the signature of the inner product) matches the symmetric case in the more rare situation where the signature of $\phi$ is balanced. In the case where the signature of $\phi$ is imbalanced, our result has an the extra sign ambiguity. The sign ambiguity occurs independent of the inner product because for any anti-symmetric bilinear form $\tau$, there exists a linear operator $A$, such that $A* \tau = -\tau$, independent of the signature of the inner product.

\section{Proper Linear Dependence of $\{R^S_{\phi}, R^S_{\psi}, R^{\Lambda}_{\tau} \}$ }

The linear dependence of a set of three canonical algebraic curvature tensors which are all symmetric build has been addressed by Diaz and Dunn \cite{DiazDunn}. They determined that if $\phi$ is a positive definite symmetric bilinear form, $\psi,$ and $\tau$ are symmetric linear operators,  $ \tau$ full rank, Rank $( \psi ) \ge 3$, and $\{ R^S_{\phi}, R^S_{\psi}, R^S_{\tau } \}$ linearly dependent, then $\psi \tau = \tau \psi$. We consider replacing $R^S_{\tau}$ with $R^{\Lambda}_{\tau}$ with $\tau$ skew-adjoint,  and show that commutativity of $\psi$ and $\tau$ is still a necessary condition for the linear dependance of three canonical algebraic curvature tensors. 

Let \emph{proper linear independence} refer to the linear independence, where the subsets are assumed to be linearly independent. Since all cases of two canonical algebraic curvature tensors has already been considered (see \cite{DiazDunn} and \cite{Diroff}), we can use proper linear independence in place of linear independence without loss of generality.

We prove that $\{ R^S_I, R^S_B, R^{\Lambda}_C \}$  is linearly independent if $\{B, I, C \}$ is linearly independent, where $B=B^*$, $C= -C^*$, and $I$ refers to the identity. Since $\{ R^S_I, R^S_B, R^{S}_C \}$  is linearly dependent for certain $B$, and $C$ \cite{DiazDunn}, our results indicate that sets of only symmetric build canonical algebraic curvature tensors will provide a minimal expression of an algebraic curvature tensor.  

Sets of two canonical algebraic curvature tensors has previously been considered, and shown to be linearly independent if the corresponding operators are nonzero and linearly independent. We include these results in the following two lemmas, as we will use them in Sections 5 and 6.

 The proper linear dependence implies that the corresponding operators are nonzero and cannot be a scalar multiple of each other, because for $\lambda \in \mathbb{R}$, $B = \lambda A$ implies that $R_A+R_B = R_A +R_{\lambda A} = R_A + \lambda^2 R_A.$   The consideration of $\sum_i c_i R_{\bar{A_i}}$ for $c_i \in \mathbb{R}$, can be simplified by letting $A_i = \sqrt{ |c_i|} \bar{ A_i}$, so $\sum_i c_i R_{\bar{A_i}} = \sum_i \epsilon_i R_{A_i}$, where $\epsilon_i =$ sign($c_i$).

\begin{lemma} \label{le:(1}
If $B= B^*$, $A= -A^*$, $\emph{Rank }(B) \ge 3$, and $A \ne 0$, then $R^{\Lambda}_A \ne \pm R^S_B$. 
\end{lemma}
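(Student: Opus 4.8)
The plan is to argue by contradiction. Suppose $R^{\Lambda}_A = \epsilon R^S_B$ for some sign $\epsilon \in \{+1,-1\}$, and derive a contradiction from $\mathrm{Rank}(B) \ge 3$. Since $B = B^*$ is self-adjoint with respect to the positive definite form $\phi$, the spectral theorem supplies an orthonormal basis $e_1, \dots, e_n$ of eigenvectors, $B e_i = \lambda_i e_i$; the rank hypothesis guarantees at least three nonzero eigenvalues, which I relabel so that $\lambda_1, \lambda_2, \lambda_3 \ne 0$. I would then record the skew-symmetric array $a_{ij} := \phi(Ae_i, e_j)$, noting $a_{ij} = -a_{ji}$ and $a_{ii} = 0$ since $A = -A^*$.

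The first key step is to compute the diagonal (sectional) components. Using $A = -A^*$ one finds $R^{\Lambda}_A(x,y,x,y) = -3\,\phi(Ax,y)^2$ for all $x,y$, while using $B = B^*$ and the eigenbasis gives $R^S_B(e_i,e_j,e_i,e_j) = -\lambda_i\lambda_j$ for $i \ne j$. Matching the two sides of $R^{\Lambda}_A = \epsilon R^S_B$ on $(e_i,e_j,e_i,e_j)$ then yields $3a_{ij}^2 = \epsilon\,\lambda_i\lambda_j$ for every $i \ne j$, so in particular $\epsilon\,\lambda_i\lambda_j \ge 0$ for all $i \ne j$. When $\epsilon = -1$ this already finishes the argument: the relation forces $\lambda_i\lambda_j \le 0$ for every pair, so no two eigenvalues share a sign, leaving at most two nonzero eigenvalues and contradicting $\mathrm{Rank}(B) \ge 3$.

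The harder case, and the main obstacle, is $\epsilon = +1$, where the sectional components are perfectly compatible: both builds have nonpositive sectional terms (for $R^S_B$ this is Cauchy--Schwarz applied to the form $(u,v)\mapsto\phi(Bu,v)$, after replacing $B$ by $-B$ if necessary so that it is positive semidefinite, which leaves $R^S_B$ unchanged). Hence the diagonal data alone cannot separate the two tensors. Here the relation $3a_{ij}^2 = \lambda_i\lambda_j$ forces $\lambda_1,\lambda_2,\lambda_3$ to share a common sign, and therefore $a_{12}, a_{13} \ne 0$. To exploit the mismatch between the skew structure of $A$ and the diagonal structure of $B$, I would pass to an off-diagonal evaluation: on $(e_1,e_2,e_1,e_3)$ the symmetric build vanishes, $R^S_B(e_1,e_2,e_1,e_3) = 0$, whereas the antisymmetric build computes to $R^{\Lambda}_A(e_1,e_2,e_1,e_3) = -3\,a_{12}a_{13} \ne 0$. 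This contradicts $R^{\Lambda}_A = R^S_B$ and completes the argument.

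The essential point is that diagonal and sectional comparisons only constrain the magnitudes $a_{ij}^2$, so they cannot distinguish the two builds once the signs align; one must use a mixed component, where the extra Bianchi-type term $-2\phi(Ax,y)\phi(Az,w)$ of the antisymmetric build leaves a nonzero residue while the symmetric build does not, to detect that no skew $A$ can reproduce $R^S_B$. I expect the bookkeeping in the $\epsilon=+1$ step (verifying that the chosen off-diagonal slot indeed annihilates $R^S_B$ while surviving for $R^{\Lambda}_A$) to be the only delicate part; everything else is a direct evaluation on the eigenbasis.
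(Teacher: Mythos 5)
Your argument is correct and is essentially the paper's own proof: both diagonalize $B$ in an orthonormal eigenbasis, extract the relation $3a_{ij}^2=\pm\lambda_i\lambda_j$ from the sectional components $(e_i,e_j,e_j,e_i)$, and use a mixed component of the form $(e_1,e_2,e_1,e_k)$ (on which $R^S_B$ vanishes but $R^{\Lambda}_A$ gives $-3a_{12}a_{1k}$) to reach the contradiction, the only difference being that you anchor the case split on the three nonzero eigenvalues of $B$ and the sign $\epsilon$, whereas the paper anchors on a nonzero entry of $A$ and kills the remaining entries. One cosmetic caveat: your parenthetical claim that the sectional terms of $R^S_B$ are always nonpositive fails for indefinite $B$ (replacing $B$ by $-B$ does not make it semidefinite), but that remark is purely motivational and nothing in your actual deduction depends on it.
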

\begin{proof}

For contradiction, suppose 
$R^{\Lambda}_A = \pm R^S_B$. 
Choose a basis such that  $B$ is diagonalized and let $ \lambda_i$ be the eigenvalues. Since $A$ is nonzero, there exists an $i$ and $j$ such that $A_{ij} \ne 0$. By permuting the basis vectors, we can obtain that $A_{12} \ne 0$. Similarly, there exists $i$ such that $\lambda_i \ne 0$. 

For $i \ne 1$ or $2$,  evaluate the hypothesis with  $(e_1, e_2, e_i, e_1)$ so $A_{1i} A_{12} =0$. Then $A_{12} \ne 0$ implies $A_{i1} =0.$
Now evaluate the hypothesis with $(e_1, e_i, e_i, e_1)$ and $(e_2, e_1, e_1, e_2)$, 
so $\lambda_1 \lambda_i = 3A^2_{1i}$ and $\lambda_2 \lambda_1 = 3A^2_{12}$. Then $\lambda_{i} \ne 0$ implies $\lambda_1 =0$. Then $0 = \lambda_1 \lambda_2 = 3A_{12}^2$ contradicts that $A_{12} \ne 0$. 

If $i =1$ (and the case for $i=2$ is similar), evaluate the hypothesis with $(e_2, e_1, e_j, e_2)$ and $(e_1, e_2, e_j, e_1)$ for $j \ne 1$ or $2$, so $A_{2j}A{12} = 0$ and $A_{1j} A_{12} =0$. Thus, $A_{2j} =0$ and $A_{1j} =0$. Then evaluate the hypothesis with $(e_1, e_j, e_j, e_1)$ so $\lambda_1 \lambda_j = 3A^2_{ij} =0$, so $\lambda_j =0$ for all $j \ne 1$ or $2$. This contradicts $rank(B) \ge 3$. 
\end{proof}

A similar proof can be found in \cite{Lovel} but with different assumptions.

\begin{lemma} \label{le:(2}
For $A = A^*$, $B= B^*$, $\emph{Rank }(A) \ge 4$, and $\{ A,B \}$ linearly independant, $R^S_A \ne - R^S_B$; likewise, for $C = -C^*$, $C = -D^*$, $C$ and $D$ nonzero, $R^{\Lambda}_C \ne -R^{\Lambda}_D$. 
\end{lemma}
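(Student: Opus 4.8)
The plan is to prove both inequalities by contradiction, and to dispatch the anti-symmetric statement first, since it follows almost immediately from the identities already in hand. Throughout I read the second skew-adjointness hypothesis of the lemma as $D=-D^*$.

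Suppose $R^{\Lambda}_C=-R^{\Lambda}_D$ with $C=-C^*$ and $D=-D^*$ nonzero. The key observation is that an anti-symmetric build tensor evaluated on a ``diagonal'' $2$-plane collapses to a single square: using skew-adjointness, $\phi(Cx,x)=0$ and $\phi(Cy,x)=-\phi(Cx,y)$, so a direct substitution into the definition gives $R^{\Lambda}_C(x,y,x,y)=-3\phi(Cx,y)^2$, and likewise for $D$. Substituting these into the assumed identity yields $-3\phi(Cx,y)^2=3\phi(Dx,y)^2$ for all $x,y\in V$, whence $\phi(Cx,y)=\phi(Dx,y)=0$ for all $x,y$. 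This forces $C=D=0$, contradicting the hypothesis that $C$ and $D$ are nonzero. (One could instead obtain $R^{\Lambda}_C(x,y,x,y)=-3\phi(Cx,y)^2$ from Lemma \ref{le:t}, but the direct evaluation is the shortest route.)

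For the symmetric statement I would imitate the basis computation of Lemma \ref{le:(1}. Assume $R^S_A=-R^S_B$ and pick a $\phi$-orthonormal basis diagonalizing $A$, say $Ae_i=\lambda_i e_i$; since $\mathrm{Rank}(A)\ge 4$ we may reorder so that $\lambda_1,\lambda_2,\lambda_3,\lambda_4\ne 0$. Writing $B_{ij}=\phi(Be_i,e_j)$ and $\beta_i=B_{ii}$, I would read off the component equations of $R^S_A=-R^S_B$ on the index set $S=\{1,2,3,4\}$: evaluation at $(e_i,e_j,e_i,e_j)$ gives $B_{ij}^2-\beta_i\beta_j=\lambda_i\lambda_j$ for $i\ne j$; evaluation at $(e_i,e_j,e_i,e_k)$ for distinct $i,j,k$ gives $B_{ik}B_{ij}=\beta_i B_{jk}$; and the all-distinct slots give $B_{il}B_{jk}=B_{ik}B_{jl}$. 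Multiplying the relation $B_{ik}B_{ij}=\beta_i B_{jk}$ against its $i\leftrightarrow j$ partner shows that $B_{ik}B_{jk}(B_{ij}^2-\beta_i\beta_j)=0$, and since $B_{ij}^2-\beta_i\beta_j=\lambda_i\lambda_j\ne 0$ we conclude $B_{ik}B_{jk}=0$ for all distinct $i,j,k\in S$. Hence, for each index, at most one off-diagonal entry within $S$ is nonzero, so the nonzero off-diagonal entries form a matching on $S$.

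The main obstacle — and the only place where real bookkeeping is needed — is ruling out each possible matching. When all off-diagonal entries vanish, the first equation reads $\beta_i\beta_j=-\lambda_i\lambda_j$ for all distinct $i,j$, and eliminating across three indices gives $\beta_i^2=(\beta_i\beta_j)(\beta_i\beta_k)/(\beta_j\beta_k)=-\lambda_i^2<0$, an immediate contradiction. When a single edge or a pair of disjoint edges is present, the second equation, applied to a triple where two of the off-diagonals vanish, forces the relevant diagonal entry $\beta_i$ to vanish, and then the first equation degenerates to $0=\lambda_i\lambda_j\ne 0$ along a non-edge; a contradiction again. (The scalar branch $B=\lambda A$ is trivial, since then $R^S_A=-R^S_B=-\lambda^2 R^S_A$ forces $R^S_A=0$, impossible when $\mathrm{Rank}(A)\ge 4$; this is the only role of the linear-independence hypothesis.) I expect the finite matching case analysis to be the sole delicate point, everything else being forced by the sign obstruction $\beta_i^2\ge 0>-\lambda_i^2$.
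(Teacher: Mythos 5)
Your proof is correct, but it is worth noting that the paper does not actually prove this lemma: its ``proof'' is a citation to Diaz--Dunn and Diroff, so you have supplied a self-contained argument where the paper defers to the literature. Your two halves check out. For the skew case, $R^{\Lambda}_C(x,y,x,y)=-3\phi(Cx,y)^2$ is exactly right (the cross term $\phi(Cx,x)\phi(Cy,y)$ vanishes by skew-adjointness), and positive-definiteness of $\phi$ then kills $C$ and $D$ at once; this is essentially the same polarization trick Diroff uses. For the symmetric case, your component equations in a $\phi$-orthonormal eigenbasis of $A$ are all correct ($B_{ij}^2-\beta_i\beta_j=\lambda_i\lambda_j$, $B_{ik}B_{ij}=\beta_iB_{jk}$, and the product trick giving $B_{ik}B_{jk}(B_{ij}^2-\beta_i\beta_j)=0$), the reduction to a partial matching on $\{1,2,3,4\}$ is valid, and each of the three matching types is genuinely ruled out: the empty matching by $\beta_i^2=-\lambda_i^2<0$, and the one- and two-edge matchings because every vertex outside an edge gets $\beta_i=0$, after which a non-edge pair forces $0=\lambda_i\lambda_j\neq 0$. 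This is the same style of basis computation the paper uses in Lemmas \ref{le:(1} and \ref{le:dia}, just carried out here rather than cited. Two small remarks: your reading of the hypothesis as $D=-D^*$ is clearly the intended correction of the paper's typo; and your argument in fact never needs the linear-independence hypothesis --- the contradiction is reached in every case from $\mathrm{Rank}(A)\ge 4$ alone (the scalar branch $B=\lambda A$ lands in your ``empty matching'' case with $\lambda^2=-1$), so your parenthetical about linear independence being used only there is redundant rather than wrong, and your statement is actually slightly stronger than the one in the paper.
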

\begin{proof} 
The the proof can be found in Diaz and Dunn \cite{DiazDunn} and Diroff \cite{Diroff}.
\end{proof}

\begin{lemma} \label{le:dia}
Let $B=B^*$, $C=-C^*$, and $\emph{dim}V \ge 3$.  If $ \{ R^S_{I}, R^S_{B} , R^{\Lambda}_{ C } \} $ is  linearly dependent, then $BC = CB$ and $\emph{Rank }(C) =2$. 
\end{lemma}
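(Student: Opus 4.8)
The plan is to convert the dependence relation into a system of scalar equations by evaluating on a basis adapted to $B$, and then to choose the test $4$-tuples cleverly. Because we work with \emph{proper} linear dependence, the pairwise results (Lemmas \ref{le:(1} and \ref{le:(2}, together with the triviality that each singleton is independent) force a relation $aR^S_I + bR^S_B + cR^{\Lambda}_C = 0$ in which $a,b,c$ are all nonzero; in particular $R^{\Lambda}_C \neq 0$, so $C \neq 0$. Since $\phi$ is positive definite and $B = B^*$, I would fix an orthonormal basis $\{e_i\}$ diagonalizing $B$, with $Be_i = \beta_i e_i$, and record the skew matrix $C_{ij} = \phi(Ce_i, e_j) = -C_{ji}$. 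Evaluating the relation on $(e_p, e_q, e_r, e_s)$ then reads
$$ (a + b\beta_p\beta_q)(\delta_{ps}\delta_{qr} - \delta_{pr}\delta_{qs}) + c\,(C_{ps}C_{qr} - C_{pr}C_{qs} - 2C_{pq}C_{rs}) = 0, $$
and the whole proof amounts to specializing $(p,q,r,s)$. One could instead substitute the identity of Theorem \ref{id:1} to rewrite $R^{\Lambda}_C$ through symmetric builds, but direct evaluation keeps the bookkeeping cleaner.

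The first key step pins down the shape of $C$. Evaluating on $(e_i, e_j, e_k, e_i)$ with $i,j,k$ distinct makes every $\delta$-term vanish and collapses the $C$-part to $3c\,C_{ij}C_{ik}$; since $c\neq 0$ this gives $C_{ij}C_{ik} = 0$ for all distinct $i,j,k$. Hence each row of $C$ has at most one nonzero off-diagonal entry, so the support of $C$ is a matching and, in this basis, $C$ is a direct sum of $2\times 2$ skew blocks. To rule out two blocks, suppose $\{1,2\}$ and $\{3,4\}$ both carry blocks (which needs $\dim V \ge 4$) and evaluate on the cross tuple $(e_1, e_3, e_2, e_4)$: all $\delta$-terms vanish, and every $C$-term except $C_{pr}C_{qs} = C_{12}C_{34}$ drops out, giving $c\,C_{12}C_{34} = 0$, i.e. $C_{12}C_{34} = 0$, contradicting that both blocks are nonzero. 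Thus $C$ has exactly one block (using $C \neq 0$), so $\mathrm{Rank}(C) = 2$.

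For commutativity, after permuting the basis I may assume the single block sits on $\{1,2\}$, so the only nonzero entries are $C_{12} = -C_{21}$. In the $B$-eigenbasis, $BC = CB$ is equivalent to $C_{ij}(\beta_i - \beta_j) = 0$ for all $i,j$, so it suffices to prove $\beta_1 = \beta_2$. Here the hypothesis $\dim V \ge 3$ guarantees an index $k \ge 3$ lying outside the block; evaluating on $(e_1, e_k, e_k, e_1)$ and on $(e_2, e_k, e_k, e_2)$ kills the $C$-terms (since $C_{1k} = C_{2k} = 0$) and yields $a + b\beta_1\beta_k = 0$ and $a + b\beta_2\beta_k = 0$. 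Because $a \neq 0$, the first equation forces $\beta_k \neq 0$; subtracting the two equations and using $b \neq 0$ then gives $\beta_1 = \beta_2$, hence $BC = CB$.

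The main obstacle I anticipate is not any single computation but making the handling of the coefficients rigorous: every step that divides by $a$, $b$, or $c$ rests on proper linear dependence, and the deduction $\beta_k \neq 0$ in the last paragraph in particular depends on $a \neq 0$, which is exactly what properness supplies. The cleanest structural idea, worth isolating, is the cross-tuple evaluation producing $C_{12}C_{34} = 0$, since it simultaneously delivers $\mathrm{Rank}(C) = 2$ and keeps the argument from fragmenting into many eigenvalue cases.
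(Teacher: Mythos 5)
Your proof is correct and follows essentially the same route as the paper's: fix an orthonormal eigenbasis diagonalizing $B$, evaluate the dependence relation on chosen $4$-tuples of basis vectors to force $C$ into a single $2\times 2$ skew block, and conclude $\mathrm{Rank}(C)=2$. In fact your treatment of commutativity is more complete than the paper's: the paper infers $BC=CB$ directly from ``$B$ diagonal and $C$ a single block on $\{e_1,e_2\}$,'' which actually requires $\lambda_1=\lambda_2$, and your evaluations on $(e_1,e_k,e_k,e_1)$ and $(e_2,e_k,e_k,e_2)$ supply exactly that missing equality.
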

 
\begin{proof}

$\{ R^S_I, R^S_B, R^{\Lambda}_C \}$  linearly dependent implies that $R^S_{ I} \pm R^S_{B} \pm R^{\Lambda}_{C}=0$ with $R^S_{ B}$ and $R^{\Lambda}_{C}$ nonzero ( as noted above). Multiply by $-1$ if necessary, so that the first term is positive, and let $\epsilon$, $\delta \in \{-1,+1\}$, so
\begin{equation} \label{eq:hyp}
 R^{\Lambda}_{C}= \epsilon R^S_{B}+ \delta R^S_{I}. 
\end{equation}
 
Let $\{ e_1,...,e_n \}$ be an orthonormal basis that diagonalizes $B$ and let $\lambda_i$ be the $i$th eigenvalue of $B$. Let $C_{ij}$ refer to the $i$th row and $j$th column of the matrix representation of $C$. Note that $C_{ij} = - C_{ji}$ and $C_{ii} =0$.
Note that $R^S_{ B}$ and $R^{\Lambda}_{C}$ nonzero implies that $Rank C \ge 1$ and $B \ne 0$. Since $C \ne 0$, there exist $i,j$ such that $C_{ij} \ne 0$. By permuting the basis vectors so that $B$ is kept diagonalized, we may assume without loss of generality that $C_{12} \ne 0$.

Now we prove that the entries of $C$ are all zero, except $C_{12}=-C_{21}$. Evaluating $(e_1, e_2, e_k, e_1)$ into the Equation \ref{eq:hyp}, results in $C_{12} C_{1k} =0.$ Similarly, $(e_2, e_1, e_k, e_2)$ results in $C_{12} C_{2k} =0$. Thus, $C_{1k}=C_{2k} =0$. 
Then evaluating Equation \ref{eq:hyp} with $(e_1, e_2, e_i, e_k)$ results in $$C_{1k} C_{2 i} - C_{1i} C_{2k} -2 C_{12} C_{ik}=0.$$ Then $-2 C_{12} C_{ik}=0$ and so $C_{ik}=0$. 
Then the matrix representation of $C$ with respect to $\{ e_1,...,e_n \}$ is skew-block diagonalized, since the only non-zero entries are $C_{12} = -C_{21}$. Since $B$ is diagonalized with respect to  $\{ e_1,...,e_n \}$, $BC =CB$. 

\end{proof}

\begin{theorem}
Let $B=B^*$, $C =-C^*$, and $\emph{dim} V >3$. If $\{ I, B, C\}$ is linearly independent, then  $\{R^S_{I}, R^S_{B}, R^{\Lambda}_{C} \}$ is  linearly independent. 
\end{theorem}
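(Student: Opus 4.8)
The plan is to prove the contrapositive: assuming $\{R^S_I, R^S_B, R^{\Lambda}_C\}$ is linearly dependent, I will show that $\{I, B, C\}$ must be linearly dependent as well. Since all two-element subsets are already known to be independent by Lemmas \ref{le:(1} and \ref{le:(2}, it suffices to treat proper linear dependence, which by the reductions preceding Lemma \ref{le:dia} can be normalized to the relation $R^{\Lambda}_C = \epsilon R^S_B + \delta R^S_I$ with $\epsilon, \delta \in \{-1, +1\}$, exactly as in Equation \ref{eq:hyp}. First I would invoke Lemma \ref{le:dia}, which supplies $BC = CB$, $\mathrm{Rank}(C) = 2$, and the finer structural fact that in an orthonormal basis $\{e_1, \dots, e_n\}$ diagonalizing $B$ (with eigenvalues $\lambda_i$) the only nonzero entries of $C$ are $C_{12} = -C_{21}$.

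The strategy is then to feed well-chosen $4$-tuples of these basis vectors into Equation \ref{eq:hyp} to pin down the eigenvalues of $B$. Evaluating the identity on $(e_1, e_k, e_k, e_1)$ and $(e_2, e_k, e_k, e_2)$ for $k \ge 3$ makes the $R^{\Lambda}_C$ term vanish, since no argument lands in the $12$-block, and yields $\epsilon\lambda_1\lambda_k + \delta = 0$ and $\epsilon\lambda_2\lambda_k + \delta = 0$. Because $\delta \neq 0$, these force $\lambda_1, \lambda_2, \lambda_k \neq 0$ and, by subtraction, $\lambda_1 = \lambda_2$. This is precisely where the strengthened hypothesis $\dim V > 3$ enters: with at least two indices $i, k \ge 3$ available, evaluating on $(e_i, e_k, e_k, e_i)$ gives $\epsilon\lambda_i\lambda_k + \delta = 0$, and combining this with the previous relations forces every eigenvalue to coincide with $\lambda_1$. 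Hence $B = \lambda_1 I$, so $B - \lambda_1 I = 0$ is a nontrivial relation and $\{I, B, C\}$ is linearly dependent, contradicting the hypothesis.

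As an alternative (and self-checking) way to close, once all eigenvalues are shown equal one may instead substitute into the value of the identity on $(e_1, e_2, e_2, e_1)$, namely $3C_{12}^2 = \epsilon\lambda_1\lambda_2 + \delta$; the eigenvalue relations give $\epsilon\lambda_1^2 = -\delta$, whence $3C_{12}^2 = 0$, directly contradicting $C_{12} \neq 0$. I will present the $B = \lambda_1 I$ route as primary, since it invokes the hypothesis that $\{I, B, C\}$ is independent in the cleanest way, and note the $C_{12}=0$ contradiction as confirmation.

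I expect the main obstacle to be bookkeeping rather than anything conceptual: one must carefully verify that the skew operator $C$ contributes nothing on every tuple used to extract an eigenvalue relation, and must make explicit why $n = 3$ is insufficient. Indeed, when $n = 3$ there is only the single index $k = 3$ beyond the $12$-block, so one recovers $\lambda_1 = \lambda_2$ but has no relation tying $\lambda_3$ to $\lambda_1$; this is exactly the gap that $\dim V > 3$ closes, and it accounts for the strengthening of the dimension assumption relative to Lemma \ref{le:dia}.
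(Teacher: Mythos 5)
Your proposal is correct and follows essentially the same route as the paper: normalize the dependence relation, invoke Lemma \ref{le:dia} to reduce $C$ to the single skew block $C_{12}=-C_{21}$ in a basis diagonalizing $B$, and then evaluate on tuples of the form $(e_i,e_k,e_k,e_i)$ away from the $12$-block to force all eigenvalues of $B$ to coincide, contradicting the independence of $\{I,B,C\}$. Your explicit explanation of why $\dim V>3$ is needed (and the alternative closing via $3C_{12}^2=0$) matches the paper's remark and adds nothing that changes the argument's structure.
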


\begin{proof}
 Choose a basis $\{e_1,...,e_n \}$ that diagonalizes $B$ and let $\lambda_i$ refer to the $i$th eigenvalue of $B$. For contradiction, assume for $R^S_{B}$ and $R^{\Lambda}_{C}$ nonzero,
 \begin{equation} \label{eq:hyp2}
 R^S_{I}+  \epsilon R^S_{B}= \delta R^{\Lambda}_{C}
 \end{equation} 
 where $B=B^*$, $C=-C^*$, $\epsilon , \delta \in \{-1,1\}$, and $ \{B, C, I \}$ is linearly independent.  

By Lemma \ref{le:dia}, $C_{12} \ne 0$, and the rest of the entries of $C$ are zero. Then, for all pairs of $(i,j) \ne (1,2)$ or $(2,1)$, evaluate Equation \ref{eq:hyp2} with $(e_i, e_j, e_j, e_i)$, $(e_i, e_k, e_k, e_i)$, and $(e_j, e_k, e_k, e_j)$. Then,
 $$ 1 + \epsilon  \lambda_i \lambda_j =0, $$
$$ 1+ \epsilon  \lambda_i \lambda_k = 0, $$ 
$$ 1 + \epsilon \lambda_j \lambda_k=0. $$
Then $\lambda_i \ne 0$ for all $i$. Subtracting each equation from the other yields $0= \lambda_i (\lambda_j - \lambda_k )$, $0= \lambda_k ( \lambda_i - \lambda_j)$, and $0= \lambda_j ( \lambda_k - \lambda_i)$.  Then since $\lambda_i \ne 0$ for all $i$, $\lambda_j = \lambda_k$, $\lambda_i = \lambda_j$, and $\lambda_k=\lambda_i$.  Thus, $\lambda := \lambda_i = \lambda_j = \lambda_k$ and so  $B = \lambda I$, a contradiction.
\end{proof}

\begin{remark}
For dim $V \le 3$, there exists $B$ and $C$ such that $\{ C, D, I \}$ is linearly independent and $\{ R^S_I, R^S_B, R^{\Lambda}_C \}$ is  linearly dependent. Consider  $\epsilon = 1$ and $\delta = -1$. Let the eigenvalues of $B$ be $\lambda_1 = \lambda_2= 2$, $\lambda_3 = \frac{1}{2}$, and the only nonzero entries of $C$ be $C_{12} =1$, and $C_{21}= -1$. 
\end{remark}

For the case of all symmetric canonical curvature tensors, see \cite{DiazDunn} and for the case of all anti-symmetric canonical algebraic curvature tensors, see \cite{Diroff}. To complete the results on sets of three canonical algebraic curvature tensors,  we consider $\{ R^S_I, R^{\Lambda}_C, R^{\Lambda}_D \}$. A similar theorem was proven by Lovell \cite{Lovell}, and we include the proof for completeness. 

\begin{theorem}
Let $C=-C^*$, $D=-D^*$, and $\emph{dim} V \ge 3$. If $\{C, D, I\}$ is linearly independent, then $\{ R^S_I, R^{\Lambda}_C, R^{\Lambda}_D \}$ is linearly independent.  
\end{theorem}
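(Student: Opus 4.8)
The plan is to argue by contradiction, assuming $\{R^S_I, R^{\Lambda}_C, R^{\Lambda}_D\}$ is properly linearly dependent, and to reduce the whole argument to a single scalar identity obtained by testing the dependence relation on orthonormal pairs. First I would observe that, since $I$ is self-adjoint while $C$ and $D$ are skew-adjoint, linear independence of $\{I,C,D\}$ is equivalent to $C$ and $D$ being non-proportional. By the two-term cases (Lemmas \ref{le:(1} and \ref{le:(2}, together with Diroff's classification of pairs of anti-symmetric builds, which forces $R^{\Lambda}_C = R^{\Lambda}_D \Rightarrow C = \pm D$), a proper dependence relation must have all three coefficients nonzero; in particular $R^S_I$ occurs with nonzero coefficient. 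Using $R^{\Lambda}_{\lambda A} = \lambda^2 R^{\Lambda}_A$ to rescale $C$ and $D$ and dividing through, I may assume the relation is normalized as $R^S_I + \epsilon R^{\Lambda}_C + \delta R^{\Lambda}_D = 0$ with $\epsilon, \delta \in \{\pm 1\}$.

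Next I would evaluate this tensor identity on the $4$-tuple $(u,v,v,u)$ for an arbitrary orthonormal pair $u,v$. Because $\phi(Au,u)=0$ and $\phi(Av,u) = -\phi(Au,v)$ for skew-adjoint $A$, the definition of the anti-symmetric build collapses to $R^{\Lambda}_A(u,v,v,u) = 3\,\phi(Au,v)^2$, while $R^S_I(u,v,v,u) = 1$. Hence the identity yields the scalar relation $\epsilon\,\phi(Cu,v)^2 + \delta\,\phi(Du,v)^2 = -\tfrac13$, valid for every orthonormal pair $(u,v)$.

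The crux is the geometric interpretation of this relation. For skew-adjoint $A$ the quantity $\phi(Au,v)$ is the value of the alternating form $\omega_A(x,y)=\phi(Ax,y)$ on the oriented orthonormal frame $(u,v)$, and a short rotation computation shows $\phi(Au,v)^2$ depends only on the plane $\mathrm{span}(u,v)$. I would then restrict to any three-dimensional subspace $W$ with orthonormal basis $e_1,e_2,e_3$: on $W$ the forms $\omega_C,\omega_D$ are encoded by axial vectors $\vec c,\vec d \in \mathbb{R}^3$, the planes in $W$ are parametrized by their unit normals $\hat n \in S^2$, and $\phi(Cu,v) = \vec c\cdot\hat n$, $\phi(Du,v) = \vec d\cdot\hat n$. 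The scalar relation becomes $\hat n^{\,T}\!\left(\epsilon\,\vec c\,\vec c^{\,T} + \delta\,\vec d\,\vec d^{\,T}\right)\hat n = -\tfrac13$ for all unit $\hat n$, so the symmetric matrix $\epsilon\,\vec c\,\vec c^{\,T} + \delta\,\vec d\,\vec d^{\,T}$ has every eigenvalue equal to $-\tfrac13$ and therefore equals $-\tfrac13 I_3$. This is the contradiction: the left-hand side is a combination of two rank-one matrices, hence has rank at most two, whereas $-\tfrac13 I_3$ has rank three.

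I expect the main obstacle to be the geometric reduction in the previous paragraph, namely justifying cleanly that $\phi(Au,v)^2$ is a function of the unoriented plane and that, on a three-dimensional subspace, $P \mapsto \phi(Au,v)$ is exactly the flux $\hat n \mapsto \vec c\cdot\hat n$. Once this identification is in place the rank obstruction is immediate, and it notably needs neither rank hypotheses on $C,D$ nor the relations coming from other $4$-tuples. The only role of the hypothesis $\{I,C,D\}$ independent is to guarantee that the dependence relation genuinely involves $R^S_I$; this is also why $\dim V \ge 3$ suffices here, in contrast to the strict $\dim V > 3$ needed in the mixed-build theorem above.
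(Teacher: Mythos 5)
Your proof is correct, but it takes a genuinely different route from the paper's. The paper argues by choosing an orthonormal basis that skew-block-diagonalizes $C$ (so $C_{13}=C_{23}=0$) and then evaluates the dependence relation on just three tuples, $(e_1,e_3,e_3,e_1)$, $(e_2,e_3,e_3,e_2)$ and $(e_3,e_1,e_2,e_3)$, producing $1=3\delta D_{13}^2$, $1=3\delta D_{23}^2$ and $0=3\delta D_{13}D_{23}$, which are jointly inconsistent. You instead evaluate on $(u,v,v,u)$ for \emph{every} orthonormal pair, obtain the plane-wise identity $\epsilon\,\phi(Cu,v)^2+\delta\,\phi(Du,v)^2=-\tfrac13$, and on a three-dimensional subspace $W$ read this as the matrix equation $\epsilon\,\vec c\,\vec c^{\,T}+\delta\,\vec d\,\vec d^{\,T}=-\tfrac13 I_3$, which is impossible because the left side has rank at most two. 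Your preliminary reduction (all three coefficients nonzero via Lemmas \ref{le:(1} and \ref{le:(2}, then normalization by rescaling the operators) is exactly what the paper leaves implicit, and your two key computations --- $R^{\Lambda}_A(u,v,v,u)=3\phi(Au,v)^2$ for skew-adjoint $A$, and the identification of the restricted alternating form $\phi(A\,\cdot,\cdot)|_W$ with an axial vector so that $\phi(Au,v)=\vec a\cdot(u\times v)$ --- are both sound. What your version buys: it is basis-free with respect to $C$ and $D$, it isolates a clean structural obstruction (a combination of two rank-one quadratic forms cannot be negative definite on a three-dimensional space), and it makes transparent why only $\dim V\ge 3$ and no rank hypotheses are needed. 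What the paper's version buys: it is computationally shorter, and its adapted basis is in effect the special case of your rank argument in which the axial vector of $C$ on $\mathrm{span}\{e_1,e_2,e_3\}$ points along $e_3$. One small caution: your contradiction genuinely requires the coefficient of $R^S_I$ to be nonzero (otherwise the right-hand side of your scalar identity is $0$, which a difference of two coincident rank-one forms can achieve), so the appeal to the two-term lemmas is a needed step and not optional decoration; you do flag this correctly.
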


\begin{proof}
For contradiction, assume that there exist $C$ and $D$ such that $\{ C, D, I \}$ is linearly independent and $R^S_{I}= \epsilon R^{\Lambda}_{C} + \delta R^{\Lambda}_{D}$ where $\epsilon, \delta \in \{ 1, -1\}$, and $R^{\Lambda}_{C}$ and $R^{\Lambda}_{D}$ are nonzero. Choose a basis $\{ e_1, ..., e_n \}$ so that $C$ is skew-block diagonalized. Then, in the matrix representation of $C$,  $C_{13}=0$ and $C_{23}=0$. 

Consider evaluating $(e_1, e_3, e_3, e_1),$ $(e_2, e_3, e_3, e_2),$ and $(e_3, e_1, e_2, e_3)$ into $R^S_{I}= \epsilon R^{\Lambda}_{C} + \delta R^{\Lambda}_{D}$ yields 
$$ 0 = \delta 3 D_{13} D_{23}, $$
$$ 1= \delta 3 D^2_{13}, \text{ and } $$
$$1= \delta 3 D_{23}^2. $$ The last two equations imply that $D_{13} \ne 0$ and $D_{23} \ne 0$, which contradict the first. 
\end{proof}

\section{Chain Complex and Linear Dependence}

To contrast from the full rank assumption of previous results \cite{DiazDunn}, we allow any of the operators to have a non-trivial kernel.  In these results we need not distinguish whether the canonical curvature tensors are symmetric or anti-symmetric build, so we put no superscript on $R$.  For the rest of the paper, if either $R^{\Lambda}_B$ or $R^S_A$ may be used, we will denote the canonical algebraic curvature tensor without the superscript. 
We will consider the particular case where the operators in a chain complex.

\begin{lemma} \label{le:0}
If $\emph{Im} A \subseteq \emph{ker} B$ or $\emph{Im}B \subseteq \emph{ker}A$, $B= \pm B^*$, then $B*R_A=0$.
\end{lemma}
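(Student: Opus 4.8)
The plan is to unwind $B*R_A(x,y,z,w)=R_A(Bx,By,Bz,Bw)$ and observe that, regardless of whether $R_A$ denotes the symmetric or the anti-symmetric build, every summand is a product of two factors each of the form $\phi(ABu,Bv)$ with $u,v$ drawn from the slot variables $x,y,z,w$. I would then show that each such factor vanishes under either hypothesis, which immediately kills every term.

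More precisely, first I would note that both $R^S_A$ and $R^\Lambda_A$ are sums of products of two factors of the shape $\phi(A\alpha,\beta)$ with $\alpha,\beta\in\{x,y,z,w\}$ (the anti-symmetric build just carries one additional such product, namely $-2\phi(Ax,y)\phi(Az,w)$). After precomposing with $B$, every one of these factors becomes $\phi(A(Bu),Bv)=\phi(ABu,Bv)$. Hence the argument is uniform across the two builds, and it suffices to show $\phi(ABu,Bv)=0$ for all $u,v$.

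Next I would treat the two hypotheses separately. If $\operatorname{Im} B\subseteq\ker A$, then $AB=0$, so $ABu=0$ and the factor vanishes trivially; here the hypothesis $B=\pm B^*$ is not even used. If instead $\operatorname{Im} A\subseteq\ker B$, then $BA=0$, and I would transport the outer $B$ onto the inner argument using skew/self-adjointness: $\phi(ABu,Bv)=\phi(B^*(ABu),v)=\pm\phi(B(ABu),v)=\pm\phi((BA)(Bu),v)=0$. Since each factor is zero, every summand of $B*R_A$ is zero, giving $B*R_A=0$.

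The step I expect to be the only subtle one is the second case: unlike $\operatorname{Im} B\subseteq\ker A$ (which yields $AB=0$ outright), the condition $\operatorname{Im} A\subseteq\ker B$ only yields $BA=0$, while the factors involve $AB$ rather than $BA$. This is exactly where $B=\pm B^*$ is needed, to move the outer $B$ across the inner product and land it on $BA$. Beyond this small maneuver and the bookkeeping of confirming that the extra anti-symmetric term is also of the form $\phi(ABu,Bv)$, there is no real obstacle.
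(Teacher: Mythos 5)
Your proof is correct and follows essentially the same route as the paper: both arguments reduce to the fact that $B^*AB=\pm BAB=0$ whenever $AB=0$ or $BA=0$, with the adjointness of $B$ bridging the two cases. The only difference is that the paper invokes Lemma \ref{le:1} to rewrite $B*R_A$ as $R_{B^*AB}$ in one step, whereas you re-derive that same computation factor by factor directly from the definitions of the two builds.
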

\begin{proof}
Let $B= \pm B^*$. Either $BA=0$ or $AB=0$, since $\emph{Im }A \subseteq \emph{ker}B$ or $\emph{Im }B \subseteq \emph{ker}A$. Apply Lemma \ref{le:1} and so,  
\[ B^*R^S_A = R^S_{B^*AB}  = R^S_{ \pm BAB}  =0, \] 
\[ B^*R^{\Lambda}_A = R^{\Lambda}_{ B^*AB} = R^{\Lambda}_{ \pm BAB} = 0.\] 
\end{proof} 

\begin{lemma} \label{le:2}
If $A = \pm A^*$, and $\emph{Rank }(A^k)=p$, then $\emph{Rank }(A)=p$. 
\end{lemma}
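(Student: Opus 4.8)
The plan is to exploit that a self-adjoint or skew-adjoint operator with respect to the positive definite form $\phi$ is \emph{normal}, and that normal operators stabilize their kernels (and images) under iteration; here I take $k \ge 1$. The first observation I would record is that in both cases $A = A^*$ and $A = -A^*$ one has $AA^* = \pm A^2 = A^*A$, so that for every $x \in V$, $\phi(Ax, Ax) = \phi(x, A^*Ax) = \phi(x, AA^*x) = \phi(A^*x, A^*x)$. Because $\phi$ is positive definite, $\phi(Ax,Ax) = 0$ forces $Ax = 0$, and likewise for $A^*$, so this identity yields $\ker A = \ker A^*$ immediately.

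Next I would combine $\ker A = \ker A^*$ with the standard identity $\mathrm{Im}(A)^{\perp} = \ker(A^*)$, valid for any operator on a finite-dimensional $\phi$-inner-product space (if $\phi(v, Ax) = 0$ for all $x$, then $\phi(A^*v, x) = 0$ for all $x$, whence $A^*v = 0$ by nondegeneracy, and conversely). This gives $\mathrm{Im}(A)^{\perp} = \ker A$, hence the orthogonal decomposition $V = \mathrm{Im}(A) \oplus \ker A$. Since $A$ annihilates $\ker A$, we get $A(V) = A(\mathrm{Im}\,A)$, so $A$ maps $\mathrm{Im}(A)$ \emph{onto} $\mathrm{Im}(A)$; a surjective endomorphism of the finite-dimensional space $\mathrm{Im}(A)$ is bijective, so $A|_{\mathrm{Im}\,A}$ is a bijection.

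The conclusion then follows quickly. Since $A|_{\mathrm{Im}\,A}$ is bijective, so is its iterate $A^k|_{\mathrm{Im}\,A}$, and writing any $v = u + w$ with $u \in \mathrm{Im}\,A$ and $w \in \ker A$ gives $A^k v = A^k u$. Hence $\mathrm{Im}(A^k) = A^k(\mathrm{Im}\,A) = \mathrm{Im}(A)$, and taking dimensions gives $\mathrm{Rank}(A^k) = \mathrm{Rank}(A)$, so $\mathrm{Rank}(A) = p$.

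I expect the only delicate point to be pinning down $\ker A = \ker A^*$ cleanly, namely verifying that normality ($AA^* = A^*A$) genuinely holds in \emph{both} the self-adjoint and skew-adjoint cases and that it is positive definiteness of $\phi$ that converts $\phi(Ax,Ax)=0$ into $Ax = 0$; everything afterward is routine linear algebra. As an alternative I would keep in reserve the spectral argument: diagonalize $A$ orthogonally when $A = A^*$, or reduce $A$ to $2\times 2$ skew blocks when $A = -A^*$, and observe that a (real or complex) eigenvalue satisfies $\lambda \ne 0 \iff \lambda^k \ne 0$, so $A$ and $A^k$ have equal rank. The kernel-stabilization argument is preferable, however, since it treats both adjoint types uniformly and avoids the real normal-form case analysis.
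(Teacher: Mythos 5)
Your proof is correct, but it takes a genuinely different route from the paper. The paper argues spectrally: it diagonalizes $A$ orthogonally in the self-adjoint case, reduces $A$ to $2\times 2$ skew blocks in the skew-adjoint case, computes the corresponding normal form of $A^k$ explicitly, and reads off that $\lambda_i^k \ne 0$ iff $\lambda_i \ne 0$ --- exactly the argument you kept ``in reserve.'' Your primary argument instead uses normality ($AA^* = A^*A$ holds in both cases since $AA^* = \pm A^2 = A^*A$) to get $\ker A = \ker A^*$, hence $\operatorname{Im}(A)^{\perp} = \ker A$ and the orthogonal splitting $V = \operatorname{Im}(A) \oplus \ker A$, from which $A$ restricts to a bijection of $\operatorname{Im}(A)$ and therefore $\operatorname{Im}(A^k) = \operatorname{Im}(A)$. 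This buys you three things: a single uniform argument covering both adjoint types (the paper must do two separate normal-form computations, and its skew case is the more delicate one to set up over $\mathbb{R}$); the stronger conclusion that the images, not just the ranks, coincide; and applicability to any operator normal with respect to the positive definite $\phi$, not just $A = \pm A^*$. The paper's version is more concrete and matches the basis-by-basis computations used elsewhere in the text. Both proofs, yours and the paper's, implicitly require $k \ge 1$ (the statement fails for $k=0$); you flag this explicitly, which is a point in your favor. Your identification of the delicate step is also accurate: positive definiteness of $\phi$ is exactly what converts $\phi(Ax,Ax)=0$ into $Ax=0$ and what guarantees $V = W \oplus W^{\perp}$.
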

\begin{proof}
For $A=A^*$, diagonalize $A$ with respect to $\phi$. Then \[A=
\left( \begin{array}{cccccc}
\lambda_1 &  & 0  &  & & \\
  &  \ddots &&  \\
 &  &  \lambda_p &  & \\
  0 &    &  & 0 &  \\
 &  &&& \ddots 
\end{array} \right), \text{ so }
A^k= 
\left( \begin{array}{cccccc}
\lambda_1^k &  & 0  &  & & \\
  &  \ddots &&  \\
 &  &  \lambda_p^k &  & \\
  0 &    &  & 0 &  \\
 &  &&& \ddots 
\end{array} \right).
 \] 
 Then $\emph{Rank }(A^k)=p$ if and only if $\lambda_i^k \ne 0$ for $1 \le i \le p$. Thus, $\lambda_i \ne 
0$ for $1 \le i \le p$, and so $\emph{Rank }(A)=p$. 

For $A=-A^*$, block-diagonalize $A$ in $2 \times 2$ blocks down the diagonal and zeros elsewhere. Then, each $2 \times 2$ block of $A$, denoted $\tilde{A}$, is of the form \[ \tilde{A} = \left( \begin{array}{cc}
0 & \lambda_i \\
-\lambda_i & 0 \end{array} \right). \]
 For $k$ even, the $2 \times 2$ blocks of $A^k$ are of the form
\[ \tilde{A}^{k} = \epsilon \left( \begin{array}{cc}
\lambda_i^{k} & 0 \\
0 & \lambda_i^{k} \\
 \end{array} \right) \]
where $\epsilon= 1$ if $k =0mod4$, and $\epsilon=-1$ if $k = 2 mod4$. For $k$ odd, the $2 \times 2$ blocks are of the form 
\[ \tilde{A}^{k} = \epsilon \left( \begin{array}{cc}
0 & \lambda_i^{k} \\
- \lambda_i^{k} & 0 \end{array} \right), \]
where $\epsilon=1$ if $k = 1mod4$ and $\epsilon=-1$ if $k = 3 mod 4$.
Then $\emph{Rank }(A^k)=p$ if and only if $\lambda_i^{k} \ne 0$ for $1 \le i \le p$. This happens if and only if $\lambda_i \ne 0$ for $1 \le i \le p$. Thus $\emph{Rank }(A) = p$. 
\end{proof}

This theorem is previously established, we included the proof for completeness, and it may be found in Kaplansky's Linear Algebra and Geometry. 

\begin{lemma} \label{le:(3}
If $R^S_{B}, R^{\Lambda}_C \in \mathcal{A}(V)$, then for any linear operator $A$, $R^S_{A^*BA}, R^{\Lambda}_{A^*CA} \in \mathcal{A}(V)$.
\end{lemma}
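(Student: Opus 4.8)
The plan is to reduce the whole statement to a single structural fact: pre-composition by a linear map sends algebraic curvature tensors to algebraic curvature tensors. Once that is isolated, Lemma~\ref{le:1} lets me recognize both $R^S_{A^*BA}$ and $R^{\Lambda}_{A^*CA}$ as such pre-compositions of tensors that are already known to lie in $\mathcal{A}(V)$, and the conclusion follows immediately.

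Concretely, I would proceed in three steps. First I would record the adjoint bookkeeping: since $R^S_B \in \mathcal{A}(V)$ and $R^{\Lambda}_C \in \mathcal{A}(V)$, Lemma~\ref{tensors} gives $B = B^*$ and $C = -C^*$; using $(XY)^* = Y^*X^*$ together with $A^{**} = A$ one then computes $(A^*BA)^* = A^*B^*A = A^*BA$ and $(A^*CA)^* = A^*C^*A = -A^*CA$, so $A^*BA$ is self-adjoint and $A^*CA$ is skew-adjoint. Second, with $B = B^*$ and $C = -C^*$ available, Lemma~\ref{le:1} yields the identities $R^S_{A^*BA}(x,y,z,w) = R^S_B(Ax,Ay,Az,Aw)$ and $R^{\Lambda}_{A^*CA}(x,y,z,w) = R^{\Lambda}_C(Ax,Ay,Az,Aw)$, so it suffices to show each right-hand side is an algebraic curvature tensor. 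Third, I would verify the pre-composition fact directly: for any $R \in \mathcal{A}(V)$ the tensor $\tilde{R}(x,y,z,w) := R(Ax,Ay,Az,Aw)$ again lies in $\mathcal{A}(V)$, since substituting the arguments $Ax,Ay,Az,Aw$ slotwise into the two curvature symmetries and into the Bianchi identity turns each relation for $R$ into the corresponding relation for $\tilde{R}$ (the substitution commutes with the index permutations appearing in those relations). Applying this with $R = R^S_B$ and $R = R^{\Lambda}_C$ settles both claims.

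The one point that genuinely needs care is the degenerate-rank case. The characterization ``$R^{\Lambda}_D \in \mathcal{A}(V)$ if and only if $D = -D^*$'' is only guaranteed under a rank hypothesis, and nothing in the statement prevents $A^*CA$ from having rank at most two, so I deliberately do \emph{not} try to read off membership in $\mathcal{A}(V)$ from skew-adjointness of $A^*CA$ alone. Routing the argument through the pre-composition invariance avoids this entirely: that argument uses only that $R^{\Lambda}_C$ is already an algebraic curvature tensor and makes no assumption on $\mathrm{Rank}(A^*CA)$, so it covers the low-rank situation as well. The main obstacle is therefore organizational rather than computational---choosing the pre-composition route so as to sidestep the rank caveat---and every individual verification along the way is a routine index manipulation.
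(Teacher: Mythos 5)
Your proof is correct, but its final step takes a genuinely different route from the paper's. The paper argues purely through the adjoint characterization: it extracts $B=B^*$ and $C=-C^*$ from the hypothesis via Lemma~\ref{tensors}, computes $(A^*BA)^*=A^*BA$ and $(A^*CA)^*=-A^*CA$, and then reads membership in $\mathcal{A}(V)$ directly off the ``if'' direction of that same characterization---it never mentions precomposition. You share the first two steps but then route the conclusion through Lemma~\ref{le:1} plus the observation that $R(A\cdot,A\cdot,A\cdot,A\cdot)\in\mathcal{A}(V)$ whenever $R\in\mathcal{A}(V)$, precisely to avoid applying the characterization to $A^*CA$, whose rank you cannot control. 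That caution is understandable given how the paper attaches a rank hypothesis when stating the characterization, but it is not strictly necessary: the rank condition is only needed for the ``only if'' direction (deducing adjointness from membership), whereas the ``if'' direction---a self-adjoint operator gives $R^S\in\mathcal{A}(V)$ and a skew-adjoint one gives $R^{\Lambda}\in\mathcal{A}(V)$---holds with no rank restriction, and that is all the paper's proof uses on $A^*BA$ and $A^*CA$. Your route buys robustness and self-containment (membership follows from precomposition invariance alone, independent of any rank discussion) at the cost of one extra, easy verification; the paper's route is shorter but leans twice on the characterization. Both arguments are sound.
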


\begin{proof}
We refer to a result of Gilkey that $R^S_A \in \mathcal{A}(V)$ if an only if $A=A^*$ and $R^{\Lambda}_A \in \mathcal{A}(V)$ if and only if $A=-A^*$ \cite{GilkeyH}. Since $R^S_B \in \mathcal{A}(V)$, $B=B^*$. Consider $(A^* B A)^*= A^* B^* A= A^* B A$. Thus, $R^S_{A^* BA} \in \mathcal{A}(V)$.  For  $R^{\Lambda}_C \in \mathcal{A}(V)$, $C=-C^*$. Consider $(A^* C A)^*= A^* C^* A= -A^* C A$. Thus, $R^{\Lambda}_{A^* CA} \in \mathcal{A}(V)$.  
\end{proof}

We now reach the main results of the section. We show that in cases where the operators are arranged in a chain complex, the linear dependence is very restricted. In our considerations, the set of canonical algebraic curvature tensors can be of any combination of symmetric or anti-symmetric. As a result, we will denote the canonical algebraic curvature tensor without a superscript and assume that the operators are self or skew- adjoint if the tensor is respectively symmetric or skew symmetric.

\begin{theorem}
If $A$, $B$, and $C$ are each symmetric or anti-symmetric linear operators in the following chain complex, and $R_A+ \epsilon R_B + \delta R_C=0$ for $\epsilon, \delta = \pm1$, then $\{A, B, C\}$ is linearly dependent. Moreover, if $\emph{Rank }(A) \ge 4$ and $\emph{Rank }(C) \ge 4$, then  $C= \pm A$, and  $\delta =-1$. Furthermore, if the chain complex is an exact sequence and $B=-B^*$ then $A$ and $C$ are invertible.
\end{theorem}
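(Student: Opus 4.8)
The plan is to push the linear relation through the precomposition calculus of Lemmas \ref{le:1} and \ref{le:0}, using that the chain complex forces $BA=0$ and $CB=0$ (that is, $\mathrm{Im}\,A\subseteq\ker B$ and $\mathrm{Im}\,B\subseteq\ker C$). First I would precompose the hypothesis $R_A+\epsilon R_B+\delta R_C=0$ with $B$. Because $B=\pm B^*$ and $BA=0$, Lemma \ref{le:0} gives $B*R_A=0$; because $CB=0$, the same lemma gives $B*R_C=0$. Hence $B*R_B=0$, and by Lemma \ref{le:1} this reads $R_{B^*BB}=R_{\pm B^3}=0$. Since a nonzero canonical tensor requires its operator to have rank at least two, we get $\mathrm{rank}(B^3)\le 1$, and as $\mathrm{rank}(B^3)=\mathrm{rank}(B)$ by Lemma \ref{le:2}, we conclude $\mathrm{rank}(B)\le 1$. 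Thus $R_B=0$, and in the skew-adjoint case, where the rank is even, $B=0$ outright. This collapses the relation to $R_A=-\delta R_C$ and gives the degeneracy behind the linear dependence of $\{A,B,C\}$.

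For the "moreover" statement I would assume $\mathrm{rank}(A),\mathrm{rank}(C)\ge 4$, so that $R_A$ and $R_C$ are nonzero. I would first exclude $A$ and $C$ having opposite adjoint type: if one were self-adjoint and the other skew-adjoint, then $R_A=-\delta R_C$ would assert an equality of the form $R^S_\bullet=\pm R^\Lambda_\bullet$, which is impossible by Lemma \ref{le:(1}. With $A$ and $C$ of the same build, I would rule out $\delta=+1$: then $R_A=-R_C$, and Lemma \ref{le:(2} forces $\{A,C\}$ to be linearly dependent, say $C=\lambda A$, whence $R_A=-\lambda^2 R_A$ and so $R_A=0$, contradicting $\mathrm{rank}(A)\ge 4$. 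Therefore $\delta=-1$ and $R_A=R_C$; the uniqueness result that a canonical tensor of rank at least four determines its operator up to sign (Gilkey \cite{GilkeyH}, as already invoked in the proof of Theorem \ref{le:3.2}) then yields $C=\pm A$.

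For the "furthermore" statement, $B=-B^*$ places us in the skew case, where the first paragraph already forces $B=0$. Exactness at the two interior nodes reads $\mathrm{Im}\,A=\ker B$ and $\ker C=\mathrm{Im}\,B$; with $B=0$ these become $\mathrm{Im}\,A=V$ and $\ker C=0$, so $A$ is surjective and $C$ is injective, and hence both are invertible on the finite-dimensional space $V$.

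The step I expect to be the main obstacle is the "moreover": one must split into the self- and skew-adjoint combinations for $A$ and $C$ and pin down the sign $\delta$, and in the surviving case the two-tensor lemmas only settle $R_A=-R_C$, so the equality $R_A=R_C$ must be handled by the separate up-to-sign uniqueness. A secondary subtlety is the rank-one self-adjoint possibility for $B$ in part one, where $R_B=0$ holds but the honest linear dependence of $\{A,B,C\}$ must be read off from the collapsed relation $R_A=-\delta R_C$ rather than from $B=0$.
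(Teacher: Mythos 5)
Your proposal is correct and follows essentially the same route as the paper: precompose with $B$ and use Lemmas \ref{le:0}, \ref{le:1}, and \ref{le:2} to force $R_B=0$ (with $B=0$ in the skew case), then apply Lemmas \ref{le:(1} and \ref{le:(2} together with Gilkey's up-to-sign uniqueness for the ``moreover,'' and exactness plus $B=0$ for the ``furthermore.'' You even correctly flag the one soft spot (deducing linear dependence of $\{A,B,C\}$ when $B$ is self-adjoint of rank one), which the paper's own proof also leaves implicit.
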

\vspace{-12mm}
\begin{center}
\begin{diagram}
V &\rTo^{A} & V & \rTo^{B} & V & \rTo^{C} & V
\end{diagram}
\end{center}

\begin{proof}
By hypothesis, $R_A + \epsilon R_B + \delta R_C=0$ for $R_A, R_B,$ and $R_C$ symmetric or anti-symmetric canonical algebraic curvature tensors and $\epsilon, \delta = \pm 1$. The chain complex implies that $CB=0$ and $BA=0$. Then precomposing the sum with $B$ we obtain 
$$B* R_A (x, y, z, w)+ \epsilon B*R_B (x, y, z, w) +\delta B*R_C (x, y, z, w)=0.$$  
Applying Lemma \ref{le:0} to the above equation, results in $B*R_B (x, y, z, w)=0$. Lemma \ref{le:1} implies that $R_{B^3} (x,y,z,w)=0.$ Now we consider $B$ symmetric and anti-symmetric separately, so suppose $B=-B^*$. Then $R^{\Lambda}_{B^3}=0$ if and only if $B^3=0$ \cite{GilkeyH}. By Lemma \ref{le:2}, $B=0$. For the other case, suppose $B=B^*$, then $R_{B^3}^{S} =0$ if and only if $\emph{Rank }(B^3) \le 1$ \cite{GilkeyH}. By Lemma \ref{le:2}, $\emph{Rank }(B) \le 1$ and so $R^{S}_B=0$. Thus,  $R_B=0$   \cite{GilkeyH}. 
aaa

 Now add the assumption that $\emph{Rank }(A), \emph{Rank }(C) \ge 4$ and apply Lemma \ref{le:(1} to the resulting equation $R_A + \delta R_C=0$, so $R_A$ and $R_C$ must be the same build by \ref{le:(1}. Then $R^S_A = - \delta R^{S}_C$ or $R^{\Lambda}_A= - \delta R^{\Lambda}_C$, so by Lemma \ref{le:(2}, $\delta= -1$ and from \cite{GilkeyH}, $A= \pm C$. 

Finally, we add the assumptions that  $B=-B^*$ and the sequence is exact. Since $B=-B^*$, $R_B^{\Lambda}=0$ implies that $B=0$. Since the sequence is exact, then $\emph{Im }A=V$ and $\emph{ker}C=0$, so $A$ and $C$ are invertible.
\end{proof}

\begin{theorem}
If $A, {B_1}, ..., {B_k}$ are linear operators in one of the two following sets of chain complexes such that $0= R_A+ \sum^k \epsilon_i R_{B_i}$ where each $R$ symmetric or anti-symmetric build and $\epsilon_i \in \{-1, 1\}$, then $R_A=0$. Moreover, if $A=-A^*$ then for each sequence that is exact, then the corresponding $B_i$ is invertible. 
\end{theorem}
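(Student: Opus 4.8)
The plan is to follow the proof of the preceding three-term chain-complex theorem, but to precompose the whole relation with the common operator $A$ rather than with a middle operator. First I would apply $A*$ to both sides of $0 = R_A + \sum_{i=1}^k \epsilon_i R_{B_i}$. Since precomposition is linear in the tensor argument, this gives
\begin{equation*}
0 = A*R_A + \sum_{i=1}^k \epsilon_i\, A*R_{B_i},
\end{equation*}
and the crux is that every cross term $A*R_{B_i}$ vanishes. In the first family of complexes we have $\mathrm{Im}\,A \subseteq \ker B_i$ (so $B_i A = 0$) and in the second $\mathrm{Im}\,B_i \subseteq \ker A$ (so $AB_i = 0$); in either orientation Lemma \ref{le:0} applies, with $A = \pm A^*$ serving as the self- or skew-adjoint precomposing operator, to give $A*R_{B_i} = 0$ for all $i$. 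The relation therefore collapses to $A*R_A = 0$.

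Next I would evaluate $A*R_A$ by Lemma \ref{le:1}: for $A=A^*$ it reads $A*R^S_A = R^S_{A^*AA}$, and for $A=-A^*$ it reads $A*R^{\Lambda}_A = R^{\Lambda}_{A^*AA}$. In either case $A^*AA$ equals $A^3$ up to sign, and since both builds are quadratic in their operator the sign is irrelevant, so we obtain $R_{A^3}=0$. I then split on the build of $A$ exactly as in the earlier theorem. If $A=-A^*$, then $R^{\Lambda}_{A^3}=0$ forces $A^3=0$ \cite{GilkeyH}, and Lemma \ref{le:2} upgrades this to $A=0$, hence $R_A=0$. If $A=A^*$, then $R^S_{A^3}=0$ forces $\mathrm{Rank}(A^3)\le 1$ \cite{GilkeyH}, whence Lemma \ref{le:2} gives $\mathrm{Rank}(A)\le 1$ and so $R^S_A=0$. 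Either way $R_A=0$, which is the first conclusion.

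For the \emph{moreover} clause I would specialize to $A=-A^*$, where the argument above actually yields $A=0$. For any index $i$ whose length-two complex is exact I then read invertibility of $B_i$ directly off the exactness condition: in the orientation $V\xrightarrow{A}V\xrightarrow{B_i}V$ exactness gives $\ker B_i = \mathrm{Im}\,A = \{0\}$, so $B_i$ is injective, while in the orientation $V\xrightarrow{B_i}V\xrightarrow{A}V$ exactness gives $\mathrm{Im}\,B_i = \ker A = V$, so $B_i$ is surjective. In either case $B_i$ is a bijective endomorphism of the finite-dimensional space $V$, hence invertible.

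The computational content is light; the step requiring the most care is the uniform application of Lemma \ref{le:0} across both chain-complex orientations, i.e. verifying that each inclusion $\mathrm{Im}\,A\subseteq\ker B_i$ or $\mathrm{Im}\,B_i\subseteq\ker A$ matches one of the two hypotheses of that lemma with $A$ --- not $B_i$ --- playing the role of the self- or skew-adjoint precomposing operator. The only remaining subtlety is keeping the sign and build bookkeeping straight so that the two rank criteria from \cite{GilkeyH} are invoked with the correct tensor type.
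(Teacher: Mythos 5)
Your proposal is correct and follows essentially the same route as the paper: precompose with $A$, kill each cross term via Lemma \ref{le:0}, reduce $A*R_A$ to $R_{A^3}=0$ via Lemma \ref{le:1}, split on the build of $A$ using the Gilkey criteria together with Lemma \ref{le:2}, and read off invertibility of $B_i$ from $A=0$ plus exactness. Your treatment of the two chain-complex orientations in the \emph{moreover} clause is in fact slightly more explicit than the paper's.
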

\hspace{-5cm} \begin{diagram}
& & &  & V  &&& V & & \\
& & & \ruTo^{B_1} & \dDots &&& \dDots & \rdTo^{B_1} & \\
V &  \rTo^{A} & V &\rTo^{B_i} &V & or && V &\rTo^{B_i} &V & \rTo^A & V \\
& & & \rdTo^{B_k} &\dDots &&& \dDots &\ruTo^{B_k} & \\
& & & & V &&& V & & \\
\end{diagram}

\begin{proof}
First, note that both diagrams depict a collection of $k$ chain complexes of length 2. Thus, $\emph{Im }A \subseteq \emph{ker}B_i$ so  $B_i A=0$ for all $i$, or $\emph{Im }B_i \subseteq \emph{ker}A$, so $AB_i =0$ for all $i$.
Consider $0= R_A + \sum \epsilon_i R_{B_i}$, where each canonical algebraic curvature tensor may be either symmetric or anti-symmetric build. Precompose the sum with $A$, so by Lemmas \ref{le:0} and \ref{le:1}, $$0= A* R_A + \sum \epsilon_i A* R_{B_i} = R_{A^3} + \sum \epsilon_i R_{AB_iA} = R_{A^3}.$$ Thus, $R_{A^3}=0$. For  $A=-A^*$, $R^{\Lambda}_{A^3}=0$, if and only if $A^3=0$ \cite{GilkeyH}. Then $A=0$ by Lemma \ref{le:2} so $R^S_A = 0$. If $A=A^*$, then $R^S_{A^3}=0$ if and only if $\emph{Rank } (A^3) \le 1$ \cite{GilkeyH}. Then $\emph{Rank } (A) \le 1$ by Lemma \ref{le:2}, and so $R_A^{S}=0$.

Now suppose that $A=-A^*$ and $\emph{Im }A=\emph{ker}B_i$ for some $i$ (or similarly, $\emph{Im }B_i = \emph{ker}A$), then $A=-A^*$, implies that $R_A$ is of anti-symmetric build, so $R^{\Lambda}_A =0$ implies that $A=0$. Then, $0=\emph{Im }A=\emph{ker}B_i$. Thus $B_i$ is invertible.
\end{proof}

\begin{theorem}
Let $A$, $B$, $C$, and $D$ be self or skew adjoint, and in the following chain complex. If $R_A + \epsilon_1 R_B + \epsilon_2 R_C + \epsilon_3 R_D=0$ and $\emph{Rank }(B) \ge 4$ and $\emph{Rank }(C) \ge 4$, then $R_A$ and $R_C$ are the same build and $R_B$ and $R_D$ are the same build. Moreover, $B^3 = \pm BDB$ and  $C^3 = \pm CAC$, $\epsilon_2 =-1$, and  $\epsilon_1 = - \epsilon_3$. 
\end{theorem}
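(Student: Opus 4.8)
The plan is to follow the precomposition strategy of the preceding chain-complex theorems, but to precompose the relation with $B$ and with $C$ \emph{separately}, so that in each case the chain relations $BA=0$, $CB=0$, $DC=0$ collapse the four-term sum to a two-term proportionality. Precomposing $0=R_A+\epsilon_1 R_B+\epsilon_2 R_C+\epsilon_3 R_D$ with $B$, the inclusions $\mathrm{Im}\,A\subseteq\ker B$ (from $BA=0$) and $\mathrm{Im}\,B\subseteq\ker C$ (from $CB=0$) let Lemma \ref{le:0} annihilate $B*R_A$ and $B*R_C$, while Lemma \ref{le:1} rewrites the rest, leaving $\epsilon_1 R_{B^3}+\epsilon_3 R_{BDB}=0$. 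Precomposing instead with $C$, the inclusions $\mathrm{Im}\,B\subseteq\ker C$ and $\mathrm{Im}\,C\subseteq\ker D$ (from $DC=0$) kill $C*R_B$ and $C*R_D$, leaving $R_{CAC}+\epsilon_2 R_{C^3}=0$; here the $R_A$ term survives because the complex imposes no relation between $A$ and $C$. By Lemma \ref{le:2}, $\mathrm{Rank}(B^3)=\mathrm{Rank}(B)\ge 4$ and $\mathrm{Rank}(C^3)=\mathrm{Rank}(C)\ge 4$, so $R_{B^3}$ and $R_{C^3}$ are nonzero, and hence so are their proportional partners $R_{BDB}$ and $R_{CAC}$.

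Next I would run the same three-step analysis on each two-term relation. \emph{(i) Same build.} The builds of $R_{B^3}$ and $R_{BDB}$ agree precisely when $B$ and $D$ are of the same type (self-adjoint or skew-adjoint), and likewise $R_{C^3}$ and $R_{CAC}$ agree exactly when $C$ and $A$ do; a mixed pair would express a skew-build tensor as a nonzero multiple of a symmetric-build tensor, which Lemma \ref{le:(1} forbids. This yields that $R_B$ and $R_D$ are the same build and that $R_A$ and $R_C$ are the same build. \emph{(ii) Signs.} With builds matched, the first relation reads $R_{B^3}=-(\epsilon_3/\epsilon_1)R_{BDB}$ with $-\epsilon_3/\epsilon_1\in\{\pm1\}$; the value $-1$ is impossible, since $R_{B^3}=-R_{BDB}$ would contradict Lemma \ref{le:(2} (using $\mathrm{Rank}(B^3)\ge4$) unless $B^3$ and $BDB$ are proportional, in which case $R_{B^3}=0$. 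Hence $\epsilon_3=-\epsilon_1$, i.e.\ $\epsilon_1=-\epsilon_3$, and the identical argument applied to $R_{CAC}+\epsilon_2 R_{C^3}=0$ (whose $R_A$ coefficient is $+1$) forces $\epsilon_2=-1$. \emph{(iii) Operators.} The relations now read $R_{B^3}=R_{BDB}$ and $R_{C^3}=R_{CAC}$, so the uniqueness result $R_M=R_N\Rightarrow M=\pm N$ of Gilkey \cite{GilkeyH} gives $B^3=\pm BDB$ and $C^3=\pm CAC$.

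I expect the delicate step to be the same-build determination in (i), precisely the subcase where the symmetric-build tensor is carried by $BDB$ (resp.\ $CAC$) rather than by $B^3$ (resp.\ $C^3$) --- that is, $B$ skew with $D$ self, or $C$ skew with $A$ self. Here the symmetric operator in Lemma \ref{le:(1} is $BDB$, whose rank the hypothesis $\mathrm{Rank}(B)\ge4$ does not directly control, whereas the lemma needs rank at least $3$. Nonvanishing of $R_{BDB}$ excludes $\mathrm{Rank}(BDB)\le1$, so only rank $2$ is genuinely at issue, and I would settle it by evaluating both sides of $R^{\Lambda}_{B^3}=c\,R^S_{BDB}$ at $(x,y,x,y)$: for $x\in\ker(BDB)$ the right side vanishes while the left equals $-3\,\phi(B^3x,y)^2$, and since $\dim\ker(BDB)\ge n-2>n-4\ge\dim\ker(B^3)$ there is an $x\in\ker(BDB)$ with $B^3x\ne0$, producing a $y$ with $\phi(B^3x,y)\ne0$ and hence a contradiction. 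Once the same-build conclusions are secured, steps (ii) and (iii) are uniform applications of Lemma \ref{le:(2} and of \cite{GilkeyH}.
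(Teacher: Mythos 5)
Your proposal is correct and follows essentially the same route as the paper: precompose with $B$ and with $C$ separately to obtain $\epsilon_1 R_{B^3}+\epsilon_3 R_{BDB}=0$ and $\epsilon_2 R_{C^3}+R_{CAC}=0$, then apply Lemma \ref{le:(1} for the same-build claims, Lemma \ref{le:(2} for the signs, and Gilkey's uniqueness result for $B^3=\pm BDB$ and $C^3=\pm CAC$. Your additional treatment of the subcase where the symmetric-build operator is $BDB$ (whose rank the hypotheses do not directly control, so Lemma \ref{le:(1} does not immediately apply) is a genuine refinement that the paper's proof silently skips, and your evaluation at $(x,y,x,y)$ with $x\in\ker(BDB)$ correctly closes that gap.
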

\vspace{-10mm}
\begin{diagram}
V & \rTo^A & V  & \rTo^B & V & \rTo^C & V & \rTo^D & V \\
\end{diagram}

\begin{proof}

Precompose $R_A + \epsilon_1 R_B + \epsilon_2 R_C + \epsilon_3 R_D=0$ with $B$ and then separately with $C$, to get that 
\begin{equation} \label{eq:B2}
\epsilon_1 R_{B^3} + \epsilon_3 R_{BDB} =0, 
\end{equation}
\begin{equation} \label{eq:C2}
\epsilon_2 R_{C^3} + R_{CAC} =0.
\end{equation}

By Lemma \ref{le:(1} $R_{B^3}$ and $R_{BDB}$ must be of the same build, and so $R_B$ and $R_D$ must be the same build. Similarly, for $R_A$ and $R_C$. Applying Lemma \ref{le:(2} to Equation \ref{eq:B2} implies that $\epsilon_1= - \epsilon_3$, and to Equation \ref{eq:C2} implies $\epsilon_2=-1$. Finally, applying \cite{GilkeyH} results in that $B^3 = \pm BDB$ and $C^3= \pm CAC$.

\end{proof}

\section{Bounds on $\nu(R)$ and $\eta(R)$}

Diaz-Ramos and Garcia-Rio \cite{DiazGarcia} obtained an upper bound of $\frac{n(n+1)}{2}$ for $\nu(n)$, where $\nu(n) = \sup_{R \in \mathcal{A}(V) } \{ \nu (R) \}$; however this bound is far from optimal. Thus, we are interested in a general method for reducing $\nu(R)$. We develop a method for reducing the number of terms in a sum of canonical algebraic curvature tensors, given that at least one term has an operator with a nontrivial kernel.  Our interest in canonical algebraic curvature tensors, where the operators have nontrivial kernel is because the previous results have assumed the operators have full rank. Our methods in this section are general and also apply to $\eta(R)$, through the use of Theorem \ref{id:1}.

In this section, if either $R^{\Lambda}_B$ or $R^S_A$ may be used, we will denote the canonical algebraic curvature tensor without the superscript. 

\begin{theorem}
Let $R \in \mathcal{A}(V)$, $R= \epsilon R_{B} + \sum_{i=0}^k \epsilon_i R_{B_i} $,   where $\epsilon, \epsilon_i \in \{ \pm 1 \}$ and  $\emph{ker} ( B ) \ne 0$. Then for linear operator $A: V \to \emph{ker}(B)$, $\bar{R}=A*R = \sum_{i=0}^k \epsilon_i R_{A^* B_i A}$. Moreover, $R_{A^* B_i A} \in \mathcal{A}(V)$, and $R_{A^* B_i A}$ is the same build as $R_{B_i}$ for each $i$. 
\end{theorem}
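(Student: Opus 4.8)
The plan is to precompose the whole relation $R = \epsilon R_B + \sum_{i=0}^k \epsilon_i R_{B_i}$ with $A$ and show that the leading term vanishes, leaving exactly the asserted sum. Because $R$ is an algebraic curvature tensor expressed through canonical tensors, the operator inside each summand is self-adjoint or skew-adjoint according to its build, so Lemma \ref{le:1} applies term-by-term regardless of the choice of $A$; precomposition gives $A*R = \epsilon\, R_{A^*BA} + \sum_{i=0}^k \epsilon_i\, R_{A^*B_iA}$.

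Next I would kill the leading term using the hypothesis $A\colon V \to \ker(B)$, which says $\mathrm{Im}\, A \subseteq \ker B$, i.e.\ $BA = 0$. Hence $A^*BA = A^*(BA) = 0$ and $R_{A^*BA} = R_0 = 0$, so $\bar R = A*R = \sum_{i=0}^k \epsilon_i\, R_{A^*B_iA}$, which is the claimed identity. The ``moreover'' then follows from Lemma \ref{le:(3} together with the adjoint computation $(A^*B_iA)^* = A^* B_i^* A = \pm A^*B_iA$: when $B_i = B_i^*$ the operator $A^*B_iA$ is self-adjoint, so $R^S_{A^*B_iA} \in \mathcal{A}(V)$ is symmetric build, and when $B_i = -B_i^*$ it is skew-adjoint, so $R^\Lambda_{A^*B_iA} \in \mathcal{A}(V)$ is anti-symmetric build; in each case the build matches that of $R_{B_i}$, and $\bar R \in \mathcal{A}(V)$ as a finite sum of algebraic curvature tensors.

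I do not expect a serious obstacle here, as the result is essentially a direct application of the precomposition identity. The one point requiring care is that the precomposing operator $A$ is arbitrary and in particular need not be self- or skew-adjoint, so one cannot invoke Lemma \ref{le:0} (which assumes the precomposing operator is $\pm$ self-adjoint) to eliminate the $R_B$ term; instead the vanishing must come from the direct identity $A^*BA = 0$ via Lemma \ref{le:1}. Beyond that, the only bookkeeping is to track the build of each summand so that the correct equation of Lemma \ref{le:1} and the correct conclusion of Lemma \ref{le:(3} are applied.
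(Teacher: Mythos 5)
Your proposal is correct and follows essentially the same route as the paper: precompose with $A$, use Lemma \ref{le:1} term by term, kill the $R_B$ term, and invoke Lemma \ref{le:(3} for the ``moreover.'' The one substantive difference is how the leading term is eliminated: the paper cites Lemma \ref{le:0}, whereas you observe (correctly) that Lemma \ref{le:0} as stated requires the \emph{precomposing} operator to satisfy $B=\pm B^*$, a hypothesis the arbitrary $A\colon V\to\ker(B)$ need not meet, and you instead derive the vanishing directly from Lemma \ref{le:1} via $A^*BA = A^*(BA)=0$. This is the more rigorous reading of the step, and since $B=\pm B^*$ holds for the canonical tensor $R_B$ anyway, Lemma \ref{le:1} applies and your computation goes through; your version arguably patches a small imprecision in the paper's citation rather than diverging from its argument.
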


\begin{proof}
Consider $R= \epsilon R_{B} + \sum_{i=0}^k \epsilon_i R_{B_i} $, where $\epsilon, \epsilon_i \in \{ \pm 1 \}$, ker $(B) \ne 0$, and $B_i = \pm B_i^*$. Let $A:V \to$ ker$( B )$. Then by Lemmas \ref{le:0} and \ref{le:1}, $$\bar{R} = A*R = \epsilon A* R_{B} + \sum_{i=0}^k \epsilon_i A* R_{B_i} = \sum_{i=0}^k \epsilon_i R_{A^* B_i A}. $$ 

By Lemma \ref{le:(3}, $R_{A^* B_i A} \in \mathcal{A}(V)$, and is the same build as $R_{B_i}$.
 
%Then, $R^S_{A^*B_i A} \in \mathcal{A}(V)$ if an only if $(A^* B_i A)^*=A^* B_i A$. If $R_{A^*B_i A}$ is symmetrically built, then $R_{B_i}$ must have been symmetrically built and so $B_i = B_i^*$. Thus   $(A^* B_i A)^*=A^* B_i A$.   For $R^{\Lambda}_{A^* B_j A} \in \mathcal{A}(V)$, if and only if $(A^* B_j A)^* =-A^* B_j A$. If $R_{A^* B_i A}$, is anti-symmetrically built, then $R^{\Lambda}_{B_i}$ must have been anti-symmetrically built and so $B_i=-B_i^*$. Thus,   $(A^* B_j A)^* =-A^* B_j A$.

\end{proof}
 If the curvature tensors are all of the same build, then this gives a method for reducing $\eta(R)$ or $\nu(R)$.

\begin{theorem} \label{th:6.2}
Consider $R_{C} = \epsilon R_{B} + \sum_{i=0}^k \epsilon_i R_{B_i}$, where $\epsilon, \epsilon_i \in \{ \pm 1 \}$ and $\emph{ker}( B ) \ne 0$. For linear operator $A:V \to \emph{ker} ( B )$ and $A* C = \pm C$, then $R_{C} = \sum_{i=0}^k \epsilon_i R_{A^* B_i A}$. Moreover, $R_{A^* B_i A} \in \mathcal{A}(V)$ and $R_{A^* B_i A}$ is the same build as $R_{B_i}$.
\end{theorem}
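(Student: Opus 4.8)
The plan is to mimic the proof of the preceding theorem, the one new feature being that the hypothesis $A*C = \pm C$ pins down the left-hand side rather than merely renaming it. Concretely, I would precompose the defining equation $R_C = \epsilon R_B + \sum_{i=0}^k \epsilon_i R_{B_i}$ with the operator $A$ and evaluate each term using the conjugation formula of Lemma \ref{le:1}, which says $A*R_\psi = R_{A^*\psi A}$.

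First I would treat the right-hand side. By Lemma \ref{le:1}, precomposition sends each summand $R_{B_i}$ to $R_{A^*B_iA}$ and the distinguished term $R_B$ to $R_{A^*BA}$. Since $A$ maps $V$ into $\ker(B)$ we have $\operatorname{Im}A \subseteq \ker B$, hence $BA = 0$ and therefore $A^*BA = 0$; this is exactly the situation of Lemma \ref{le:0}, so the $R_B$ term dies under precomposition. Thus $A*R_C = \sum_{i=0}^k \epsilon_i R_{A^*B_iA}$, the coefficient $\epsilon$ having been annihilated along with its tensor.

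Next I would treat the left-hand side, where the preservation hypothesis enters. Applying Lemma \ref{le:1} once more gives $A*R_C = R_{A^*CA}$. Interpreting $A*C = \pm C$ as the operator identity $A^*CA = \pm C$, and using the elementary fact that the build tensors are even in their operator (so $R_{-C} = R_C$, immediate from the definition since each entry contributes two factors of the operator), I get $A*R_C = R_{\pm C} = R_C$. Comparing the two evaluations yields $R_C = \sum_{i=0}^k \epsilon_i R_{A^*B_iA}$, which is the asserted identity. The ``moreover'' is then immediate from Lemma \ref{le:(3}: since $B_i = \pm B_i^*$, the conjugate $A^*B_iA$ has the same adjointness type, so $R_{A^*B_iA} \in \mathcal{A}(V)$ and shares the build of $R_{B_i}$.

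I do not expect a serious obstacle here, as the argument is essentially mechanical once the notational conventions are fixed. The only point requiring care --- and the one place where this theorem genuinely differs from its predecessor --- is the left-hand computation: one must recognize that $A*C = \pm C$ is precisely the condition needed to collapse $R_{A^*CA}$ back to the original tensor $R_C$ (the sign being harmless by evenness), so that precomposition fixes the left side while killing the $B$-term on the right. This is what lets the conclusion be stated for the same $R_C$ rather than for a new tensor $\bar R = A*R$ as in the previous theorem.
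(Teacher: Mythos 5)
Your proposal is correct and follows the same mechanical skeleton as the paper's proof: precompose with $A$, kill the $R_B$ term via Lemma \ref{le:0} (since $\operatorname{Im}A\subseteq\ker B$ gives $BA=0$), convert each $R_{B_i}$ to $R_{A^*B_iA}$ via Lemma \ref{le:1}, and get the ``moreover'' from Lemma \ref{le:(3}. The one place you diverge is the left-hand side: the paper deduces $A*R_C=R_C$ by citing Theorem \ref{le:3.2}, whereas you compute $A*R_C=R_{A^*CA}=R_{\pm C}=R_C$ directly from Lemma \ref{le:1} and the evenness of the build in its operator. Your route is arguably the cleaner of the two, since Theorem \ref{le:3.2} is stated only for anti-symmetric bilinear forms of rank at least $4$ with $A$ invertible (and here $A$ maps into $\ker B$, so it need not be invertible, and $R_C$ may be of either build); the direct computation you give is exactly the easy containment inside that theorem's proof and applies without those hypotheses. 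No gap.
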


\begin{proof}
Consider $R_{C} = \epsilon R_{B} + \sum_{i=0}^k \epsilon_i R_{B_i}$, where $\epsilon, \epsilon_i \in \{ \pm 1 \}$ and $\emph{ker}( B ) \ne 0$. Let $A:V \to \emph{ker} (B)$, such that $A * C = \pm C$.  Then $A*R_C = R_C$ follows from Theorem  \ref{le:3.2}. Then, by Lemmas \ref{le:0} and \ref{le:1}, $$R_{C} = A* R_{C} = \epsilon A* R_{B} + \sum_{i=0}^k \epsilon_i A* R_{B_i}= \sum_{i=0}^k \epsilon_i R_{A^* B_i A}.$$ 

From Lemma \ref{le:(3}, $R_{A^* B_i A} \in \mathcal{A}(V)$ and remains the same build as $R_{B_i}$.

%If $R_{\gamma_i}$ is symmetrically built, then $\gamma_i^*=\gamma_i$ and so $(A^* \gamma_i A)^* = A^* \gamma_i A$. Thus, $R^{\Lambda}_{A^* \gamma_i A} \in \mathcal{A}(V)$. If $R_{\gamma_i}$ is anti-symmetrically built, then $\gamma_i^* = - \gamma_i$, and so $(A^* \gamma_i A)^* = - A^* \gamma_i A$, and so $R^S_{A^* \gamma_i A} \in \mathcal{A}(V)$.
\end{proof}
Regarding the requirement that the linear operator $A$ satisfy $A* C = \pm C$, if the consideration is parameterized by bilinear forms, then it is equivalent to $A$ being an isometry or para-isometry of the bilinear form characterized by $( Cx, y)$. 

%\begin{remark}
%In order for $A$ to preserve $C$, $A$ is of the form
%$ A=\left[
%\begin{array}{cc}
%\tilde{A} & 0 \\ 
%\bar{B} & \tilde{B}
%\end{array}\right]$, such that $\tilde{B}$ is invertible.
% Then $C$ is preserved because the block next to $\tilde{A}$ is 0. 
%In order for $A$ to map to the kernel of $B$, let $\tilde{A}= ker(B)$. Thus we have constructed an $A$ that Theorem %\ref{th:6.2}. 
%\end{remark}

This motivates a relationship between $\nu(R)$ and $\eta (R)$. In particular, if $R^{\Lambda}_A = \sum R^S_{B_i}$, then the theorem gives a method for reducing $\nu (R^{\Lambda}_{A})$ and for the opposite case, a method for reducing $\eta (R^S_{B})$. 
If the sums are combinations of both types of tensors, these theorems motivate the definition of a new bound, $\mu(R)$, and how to possibly reduce it.

\begin{definition}
Let $\mu (R) = min \{ k | R= \sum_{i=0}^k R_A, \text{ where } A =A^* \text{ or } A =-A^* \}$.
\end{definition}

Clearly $\mu(R) \le min \{ \nu(R), \eta(R) \}$ and  $\mu(n) \le min \{ \nu(n), \eta(n) \}$. Then, since $\nu(2)=1$ from \cite{GilkeyH}, we can conclude that $\mu(2)=1$.  Based on the case of sets of three canonical curvature tensors, we make the following conjecture: 

\begin{conj}
$\mu(R) = \nu(R)$
\end{conj}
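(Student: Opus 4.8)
The plan is to prove the nontrivial inequality $\nu(R) \le \mu(R)$; combined with the immediate bound $\mu(R) \le \min\{\nu(R),\eta(R)\} \le \nu(R)$ noted just above the conjecture, this yields the desired equality. So it suffices to show that every minimal mixed expression can be rewritten using only symmetric builds without increasing the number of terms. Concretely, I would start from an optimal mixed representation
$$ R = \sum_{i=1}^{p} \epsilon_i R^S_{A_i} + \sum_{j=1}^{q} \delta_j R^{\Lambda}_{C_j}, \qquad p+q = \mu(R),$$
with $A_i = A_i^*$, $C_j = -C_j^*$, and $\epsilon_i,\delta_j \in \{\pm 1\}$, and produce a purely symmetric expression with at most $p+q$ terms. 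The natural engine is Theorem \ref{id:1}, which replaces each skew summand by a combination of symmetric builds; the entire burden of the argument is controlling the resulting term count.

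I would proceed by induction on $q$, the number of anti-symmetric summands, the base case $q=0$ being vacuous. To eliminate one skew summand $R^{\Lambda}_{C}$, the goal is to show that $R^{\Lambda}_{C}$ lies in the linear span of a single symmetric canonical curvature tensor together with the remaining $p+q-1$ terms of the expression. This is exactly the assertion that mixing builds never yields a strictly shorter representation than staying within one build, and it is the pattern established throughout Section 4: when $\{I,B,C\}$ is linearly independent the mixed triple $\{R^S_I, R^S_B, R^{\Lambda}_C\}$ is independent, whereas the all-symmetric triple $\{R^S_I, R^S_B, R^S_C\}$ may be dependent. Thus replacing a symmetric generator by a skew one can only destroy dependences, never create them, which is precisely the reductive step the induction requires.

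The main obstacle is that Theorem \ref{id:1} expresses $R^{\Lambda}_{C}$ through the three symmetric build tensors $R^S_C(x,y,z,w)$, $R^S_C(x,z,y,w)$, and $R^S_C(x,w,z,y)$, of which, by the Remark following that theorem, only the first is a genuine algebraic curvature tensor; the two permuted tensors need not lie in $\mathcal{A}(V)$ individually. A naive termwise substitution therefore both triples the count and introduces non-canonical pieces. The substantive work is to show that, when summed against the remainder of the expression, the permuted pieces recombine into admissible canonical curvature tensors without inflating the term count — equivalently, that each skew build in a minimal mixed sum can be traded for a single symmetric build modulo the span of the other summands. I expect this global cancellation, rather than any single identity, to be the crux.

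A complementary route, and a plausible source of the inductive mechanism, is the kernel-reduction machinery of Section 6: when one operator is degenerate, Lemma \ref{le:(3} together with Theorem \ref{th:6.2} converts a mixed sum into a shorter sum preserving the individual builds, and such degeneracies can frequently be arranged after simultaneously diagonalizing (or skew-block-diagonalizing) the operators, as in the proofs of Section 4. Establishing that an optimal mixed representation can always be normalized so that one skew operator becomes degenerate relative to the others — bringing Section 6 to bear — would close the induction. Making that normalization unconditional is the principal gap, and is the reason the statement is offered as a conjecture rather than a theorem.
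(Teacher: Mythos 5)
This statement is offered in the paper as a conjecture, with no proof given; the only supporting evidence is the classification of triples in Section 4 and the trivial bound $\mu(R)\le\min\{\nu(R),\eta(R)\}$. Your proposal does not close that gap, and you concede as much in your final sentence, so what you have is a research plan rather than a proof. The concrete problems are these. First, your inductive step --- ``show that $R^{\Lambda}_{C}$ lies in the span of a single symmetric canonical curvature tensor together with the remaining $p+q-1$ terms'' --- is not a reduction of the conjecture but a restatement of it: the claim that a skew summand in a minimal mixed expression can always be traded for one symmetric summand modulo the other terms is exactly the assertion $\nu(R)\le\mu(R)$ localized to one term, and nothing in the paper supplies it. The evidence you cite from Section 4 runs in the wrong direction for your purposes: those theorems show that certain \emph{mixed} triples containing $R^S_I$ are linearly \emph{independent} under linear independence of the operators, which tells you mixing does not create new short dependences among those particular triples, but it gives no exchange principle for an arbitrary sum of $p+q$ terms with no identity summand and no rank hypotheses.

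Second, the mechanism you name as the engine, Theorem \ref{id:1}, writes $R^{\Lambda}_{A}$ as $2R^S_A(x,y,z,w)+R^S_A(x,z,y,w)+R^S_A(x,w,z,y)$, and as the Remark after that theorem notes, the two permuted pieces are not algebraic curvature tensors (they are not even antisymmetric in the first two arguments), so they cannot be counted as terms in a $\nu$-decomposition at all. You correctly identify that some ``global cancellation'' would have to recombine them, but you give no candidate identity or argument for why such a recombination exists, let alone why it never increases the term count. Third, the fallback via Section 6 requires that an optimal mixed representation can be normalized so that one operator acquires a nontrivial kernel positioned compatibly with the others ($A:V\to\ker B$ with $A*C=\pm C$ as in Theorem \ref{th:6.2}); there is no reason a minimal representation admits such a degeneracy, and indeed if all operators in a minimal expression have full rank the entire Section 6 machinery is inapplicable. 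Each of these three gaps is essential, and none is a technical detail: any one of them, filled, would constitute substantial progress on the conjecture itself.
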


I would like to thank Dr. Corey Dunn and Dr. Michael Marsalli for their helpful insights and guidance. I would also like to thank the NSF for funding.

%Include previous results: place them as lemmas in te section where they are needed
%Include lemma to replace Lovell's citation
%Include case of four tensors: 
%fix linear dependence
%find Diroffs results
%footnote Lovells result
%figure out A=I

\end{document}